\author{Christian J. Berghoff}
\title{Universal elliptic Gau{\ss} sums\\ for Atkin primes in Schoof's algorithm}
\address{Universit\"at Bonn, Mathematisches Institut, Endenicher Allee 60, 53115 Bonn, Germany}
\email{berghoff@math.uni-bonn.de}
\newcounter{algorithm}
\renewcommand{\thealgorithm}{\arabic{algorithm}}
\def\algorithm{\@ifnextchar[{\@algorithma}{\@algorithmb}}
\def\@algorithma[#1]{%
	\refstepcounter{algorithm}
	\trivlist
	\leftmargin\z@
	\itemindent\z@
	\labelsep\z@
	\item[\parbox{\columnwidth}{%
		\hrule
		\hrule
		\noindent\strut\textbf{Algorithm \thealgorithm.} #1
		\hrule
	}]\hfil\vskip0em%
}
\def\@algorithmb{\@algorithma[]}
\pgfplotsset{compat=1.8}
\numberwithin{equation}{section}
\theoremstyle{plain}
\newtheorem{satz}{Theorem}[section]
\newtheorem{lemma}[satz]{Lemma}
\newtheorem{koro}[satz]{Corollary}
\theoremstyle{definition}
\theoremstyle{remark}
\newtheorem{bem}[satz]{Remark}
\DeclareMathOperator{\gal}{Gal}
\DeclareMathOperator{\Tr}{Tr}
\DeclareMathOperator{\deg2}{deg}
\DeclareMathOperator{\ord}{ord}
\DeclareMathOperator{\SL}{SL}
\let\prec\relax 
\DeclareMathOperator{\prec}{prec}
\newcommand{\OO}{\mathcal{O}}
\newcommand{\Q}{\mathbb{Q}}
\newcommand{\F}{\mathbb{F}}
\newcommand{\N}{\mathbb{N}}
\newcommand{\Z}{\mathbb{Z}}
\newcommand{\C}{\mathbb{C}}
\newcommand{\An}{\mathbf{A}}
\begin{document}
\begin{abstract}
This work builds on the results obtained in \cite{ELGS_Rechnungen, ELGS}. We define universal elliptic Gau{\ss} sums for Atkin primes in Schoof's algorithm for counting points on elliptic curves. Subsequently, we show these quantities admit an efficiently computable representation in terms of the $j$-invariant and two other modular functions. We analyse the necessary computations in detail and derive an alternative approach for determining the trace of the Frobenius homomorphism for Atkin primes using these pre-computations. A rough run-time analysis shows, however, that this new method is not competitive with existing ones.
\end{abstract}
\maketitle

\tableofcontents
\section{Elliptic curves}\label{sec:ell_kurven}
We consider primes $p>3$ and thus assume that the curve $E/\F_p$ in question is given in the Weierstra{\ss} form
\[ 
E: Y^2=X^3+aX+b=f(X),
 \]
where $a, b \in \F_p$. For the following well-known statements cf. \cite{Silverman, Washington}. We assume that the elliptic curve is neither singular nor supersingular. It is a standard fact that $E$ is an abelian group with respect to point addition. Its neutral element, the point at infinity, will be denoted $\OO$. For a prime $\ell \neq p$, the $\ell$-torsion subgroup $E[\ell]$ has the shape
\[ 
E[\ell]\cong \Z/\ell\Z \times \Z/\ell\Z.
 \]
The Weil pairing $e_\ell: E[\ell] \times E[\ell] \rightarrow \mu_\ell$ is defined on the $\ell$-torsion. In the endomorphism ring of $E$ the Frobenius homomorphism
\[ 
\phi_p: (X, Y) \mapsto (\varphi_p(X), \varphi_p(Y))=(X^p, Y^p)
 \]
satisfies the quadratic equation 
\begin{equation}\label{eq:char_gl}
0=\chi(\phi_p)=\phi_p^2-t\phi_p+p,
 \end{equation} 
where $|t|\leq 2\sqrt{p}$ by the Hasse bound. By restriction $\phi_p$ acts as a linear map on $E[\ell]$. The number of points on $E$ over $\F_p$ is given by $\#E(\F_p)=p+1-t$ and is thus immediate from the value of $t$.\\
Schoof's algorithm computes the value of $t$ modulo $\ell$ for sufficiently many small primes $\ell$ by considering $\chi(\phi_p)$ modulo $\ell$ and afterwards combines the results by means of the Chinese Remainder Theorem. In the original version this requires computations in extensions of degree $\OO(\ell^2)$.
However, a lot of work has been put into elaborating improvements. Let $\Delta=t^2-4p$ denote the discriminant of equation \eqref{eq:char_gl}. Then we distinguish the following cases:
\begin{enumerate}
\item If $\left( \frac{\Delta}{\ell}\right)=1$, then $\ell$ is called an \textit{Elkies prime}. In this case, the characteristic equation factors as $\chi(\phi_p)=(\phi_p-\lambda)(\phi_p-\mu) \mod \ell$ and finding the value of $t$ modulo $\ell$ only requires working in extensions of degree $\OO(\ell)$.
\item If  $\left( \frac{\Delta}{\ell}\right)=-1$, then $\ell$ is called an \textit{Atkin prime}. In this case the eigenvalues of $\phi_p$ are in $\F_{\ell^2}\backslash\F_\ell$ and there is no eigenpoint $P \in E[\ell]$. This is the case for which a new approach is presented in this paper.
\end{enumerate}
\section{Modular functions}
We recall some facts from \cite{ELGS}, where further details may be found. A modular function of weight $k \in \Z$ for a subgroup $\Gamma^\prime \subseteq \textup{SL}_2(\Z)=:\Gamma$ is a meromorphic function $f(\tau)$ on the upper complex half-plane $\mathbb{H}=\{\tau \in \C: \Im(\tau)>0\}$ satisfying
\begin{equation}\label{eq:def_mod_funk}
f(\gamma\tau)=(c\tau+d)^kf(\tau)\ \textup{for all}\ \gamma=\left(\begin{smallmatrix}
a & b \\ c & d
\end{smallmatrix}\right) \in \Gamma^\prime,
\end{equation}
where $\gamma\tau=\frac{a\tau+b}{c\tau+d}$, and some technical conditions. Equation \eqref{eq:def_mod_funk} in particular implies $f$ can be written as a Laurent series in terms of $q_N=\exp\left(\frac{2\pi i\tau}{N}\right)$ for some $N \in \N$ depending on $\Gamma^\prime$. We use the notation $q=q_1$ and consider the groups 
$\Gamma^\prime=\Gamma_0(\ell):=\left\{\left(\begin{smallmatrix}
a & b\\ c& d
\end{smallmatrix}\right) \in \textup{SL}_2(\Z): \ell \mid c \right\}$. The field of modular functions of weight $0$ for a group $\Gamma^\prime$ will be denoted by $\An_0(\Gamma^\prime)$ and the subfield of holomorphic functions by $\mathbf{H}_0(\Gamma^\prime)$. The Fricke-Atkin-Lehner involution $w_\ell$ acts on modular functions $f(\tau)$ via $f(\tau) \mapsto f\left(\frac{-1}{\ell\tau}\right)=:f^*(\tau)$, where $f^*(\tau)=f(\ell\tau)$ for $f(\tau) \in \An_0(\textup{SL}_2(\Z))$ holds.\\
Denoting by $\wp(z, \tau)$ the Weierstra{\ss} $\wp$-function and putting $w=e^{2\pi i z}$, for $|q|<|w|<|q^{-1}|$ one obtains the following equations:
\begin{align}
\frac{1}{(2\pi i)^2}\wp(z, \tau)&=\frac{1}{12}-2\sum_{n=1}^{\infty}\frac{q^n}{(1-q^n)^2} +\sum_{n \in \Z} \frac{q^n w}{(1-q^n w)^2}=:x(w, q), \label{eq:x_def}\\
\frac{1}{(2\pi i)^3 }\wp^{\prime}(z, \tau)&=\sum_{n \in \Z}\frac{q^n w(1+q^n w)}{(1-q^n w)^3}=:2y(w, q).
\label{eq:y_def}
\end{align}
The Tate curve $E_q$ is given by the equation
\[ 
y(w, q)^2=x(w, q)^3-\frac{E_4(q)}{48}x(w, q)+\frac{E_6(q)}{864},
 \]
where $E_4, E_6$ are modular functions for $\Gamma$ of weight $4$ and $6$, respectively.
Next, we recall the Laurent series
\begin{align}
\eta(q)&=q^{\frac{1}{24}}\left(1+\sum_{n=1}^{\infty}(-1)^n\left(q^{n(3n-1)/2}+q^{n(3n+1)/2}\right)\right),\label{eq:eta_def}\\
m_\ell(q)&=\ell^s \left(\frac{\eta(q^\ell)}{\eta(q)}\right)^{2s}\quad \text{with}\quad s=\frac{12}{\gcd(12, \ell-1)}.\label{eq:ml_def}
\end{align}
We further use $p_1(q)=\sum_{\zeta \in \mu_\ell, \zeta \neq 1} x(\zeta, q)$, the modular discriminant $\Delta(q)=\eta(q)^{24}$ and the well-known $j$-invariant $j(q)$. $p_1$ is a modular function of weight $2$ for $\Gamma_0(\ell)$, whereas $\Delta$ and $j$ are modular functions for $\Gamma$ of weight $12$ and $0$, respectively. Furthermore, $\Delta(\tau)\neq 0$ holds for $\tau \in \mathbb{H}$.\\
There exists a polynomial $M_\ell \in \C[X, Y]$, sometimes referred to as the canonical modular polynomial, such that $M_\ell(X, j(q))$ is irreducible over $\C(j(q))[X]$ and $m_\ell(q)$ is one of its roots.\\
Finally, we recall some statements on fields of modular functions of weight $0$, which were shown in \cite{ELGS}.
\begin{lemma}\label{lemma:untergruppe_gal_gruppe}
Let $\Gamma^{\prime}$ be a subgroup of $\Gamma$ with $\left\{
\left( \begin{smallmatrix} a & b\\ c& d\end{smallmatrix}\right)
\equiv 
\left(\begin{smallmatrix} 1 & 0\\ 0 & 1\end{smallmatrix}\right) 
\mod N
\right\}=:\Gamma(N)\leq \Gamma^{\prime}$, then
\[ 
\gal(\mathbf{A}_0(\Gamma(N))/\mathbf{A}_0(\Gamma^{\prime}))\cong (\pm\Gamma^{\prime})/(\pm\Gamma(N))
 \]
holds. In particular $\mathbf{A}_0(\Gamma^{\prime})$ is a finite extension of $\mathbf{A}_0(\Gamma)=\C(j)$ of degree $[\Gamma: \pm \Gamma^{\prime}]$.
\end{lemma}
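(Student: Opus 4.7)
The plan is to deduce the lemma from the Galois correspondence applied to the extension $\mathbf{A}_0(\Gamma(N))/\mathbf{A}_0(\Gamma)=\C(j)$. The group $\Gamma$ acts on meromorphic functions on $\mathbb{H}$ by $(\gamma\cdot f)(\tau)=f(\gamma\tau)$; since $\Gamma(N)$ is normal in $\Gamma$, this action preserves the subfield $\mathbf{A}_0(\Gamma(N))$. The element $-I$ fixes $\mathbb{H}$ pointwise, so more generally every element of $\pm\Gamma(N)$ acts trivially, and one obtains a homomorphism
\[
\rho : \Gamma/(\pm\Gamma(N)) \longrightarrow \aut(\mathbf{A}_0(\Gamma(N))/\C(j)).
\]

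The crux, to be handled first, is to show that $\mathbf{A}_0(\Gamma(N))/\C(j)$ is Galois of degree exactly $[\Gamma : \pm\Gamma(N)]$ and that $\rho$ is an isomorphism. I would proceed by identifying $\mathbf{A}_0(\Gamma(N))$ with the function field of the compact Riemann surface $X(N) = \Gamma(N)\backslash\mathbb{H}^*$ and invoking the fact that $X(N) \to X(1) \cong \mathbb{P}^1$ is a finite branched covering of degree $[\Gamma : \pm\Gamma(N)]$ whose deck transformation group is precisely $\Gamma/(\pm\Gamma(N))$. The equivalence between finite Galois coverings of compact Riemann surfaces and Galois extensions of their function fields then yields both the degree and the fact that $\rho$ is an isomorphism. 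An alternative, more algebraic route is to exhibit a Fricke-type primitive element whose minimal polynomial over $\C(j)$ can be written down explicitly and has the expected degree.

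Granting this, the rest is a purely formal application of the Galois correspondence. For $\Gamma(N) \leq \Gamma' \leq \Gamma$, the subgroup $(\pm\Gamma')/(\pm\Gamma(N))$ of $\Gamma/(\pm\Gamma(N))$ has, by the very definition of $\mathbf{A}_0(\Gamma')$, fixed field
\[
\{f \in \mathbf{A}_0(\Gamma(N)) : f(\gamma\tau)=f(\tau)\text{ for all } \gamma \in \Gamma'\} = \mathbf{A}_0(\Gamma').
\]
Hence $\gal(\mathbf{A}_0(\Gamma(N))/\mathbf{A}_0(\Gamma')) \cong (\pm\Gamma')/(\pm\Gamma(N))$, and the degree formula
\[
[\mathbf{A}_0(\Gamma') : \C(j)] = [\Gamma : \pm\Gamma(N)]/[\pm\Gamma' : \pm\Gamma(N)] = [\Gamma : \pm\Gamma']
\]
follows by multiplicativity of the degree in towers.

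The main obstacle is clearly the first step, namely establishing that $\mathbf{A}_0(\Gamma(N))/\C(j)$ is Galois of the correct degree with Galois group induced by the $\Gamma$-action; this is where essentially all of the substance of the lemma resides. Everything afterwards, including the identification of the subgroup corresponding to $\mathbf{A}_0(\Gamma')$ and the degree computation, is a formal consequence of the Galois correspondence and hence routine.
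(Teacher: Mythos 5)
The paper does not prove this lemma itself but recalls it from \cite{ELGS}, and your argument is the standard one for this statement (cf.\ Shimura or Lang's \emph{Elliptic Functions}): establish that $\mathbf{A}_0(\Gamma(N))/\C(j)$ is Galois with group $\Gamma/(\pm\Gamma(N))$, then read off the sublemma by the Galois correspondence, whose application you carry out correctly. The only caveat is that you defer the entire substantive step --- that the extension is Galois of degree $[\Gamma:\pm\Gamma(N)]$ with the action of $\Gamma$ inducing the full Galois group --- to the covering $X(N)\to X(1)$ or to an unspecified primitive element; to make this self-contained one would, in the classical manner, adjoin the Fricke functions $f_a(\tau)$ for $a\in\bigl(N^{-1}\Z^2/\Z^2\bigr)\setminus\{0\}$, show they generate $\mathbf{A}_0(\Gamma(N))$ over $\C(j)$, that $\Gamma$ permutes them via $f_a\mapsto f_{a\gamma}$ with kernel $\pm\Gamma(N)$, and that the fixed field of this action is $\C(j)$, whence Artin's theorem gives everything you need. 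As it stands your write-up is a correct reduction to a standard cited fact rather than a complete proof, but the structure and the routine parts are right.
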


\begin{satz}\label{satz:j_f_erzeugen_mod_funk}
Let $f(\tau) \in \mathbf{A}_0(\Gamma_0(\ell))\backslash\mathbf{A}_0(\Gamma)$ be a modular function of weight $0$ for $\Gamma_0(\ell)$, but not for $\Gamma$. Then
\[ 
\mathbf{A}_0(\Gamma_0(\ell))=\mathbf{A}_0(\Gamma)(f(\tau))=\C(f(\tau), j(\tau))  
 \]
holds.
\end{satz}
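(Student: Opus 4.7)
The plan is to apply the Galois correspondence inside the ambient extension furnished by Lemma \ref{lemma:untergruppe_gal_gruppe}. Since $\Gamma(\ell)\subseteq\Gamma_0(\ell)$, that lemma (applied with $N=\ell$) yields a tower
\[
\C(j)=\mathbf{A}_0(\Gamma)\;\subseteq\;\mathbf{A}_0(\Gamma_0(\ell))\;\subseteq\;\mathbf{A}_0(\Gamma(\ell))
\]
in which the top extension is Galois over $\C(j)$ with group $G:=(\pm\Gamma)/(\pm\Gamma(\ell))\cong\mathrm{PSL}_2(\F_\ell)$. The intermediate field $\mathbf{A}_0(\Gamma_0(\ell))$ is the fixed field of the subgroup $H_0:=(\pm\Gamma_0(\ell))/(\pm\Gamma(\ell))$, which, under the above identification, is the image of the upper triangular matrices in $\mathrm{PSL}_2(\F_\ell)$.

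Next I would introduce $H_f\leq G$, the pointwise stabilizer of $f$. Since $f\in\mathbf{A}_0(\Gamma_0(\ell))$ we have $H_0\subseteq H_f$, and since $f\notin\mathbf{A}_0(\Gamma)=\C(j)$ we also have $H_f\subsetneq G$. By the Galois correspondence
\[
\C(j,f(\tau))=\mathbf{A}_0(\Gamma(\ell))^{H_f},
\]
so the claim $\C(j,f(\tau))=\mathbf{A}_0(\Gamma_0(\ell))$ reduces to the group-theoretic equality $H_f=H_0$. In other words, it suffices to show that $H_0$ is a \emph{maximal} proper subgroup of $G$.

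The main obstacle is therefore the maximality of $H_0$. I would deduce it from the classical fact that $G\cong\mathrm{PSL}_2(\F_\ell)$ acts $2$-transitively on the projective line $\mathbb{P}^1(\F_\ell)$ by M\"obius transformations, and that under this identification $H_0$ is precisely the stabilizer of the point $\infty=[1:0]$. Every $2$-transitive action is primitive, and the point stabilizer in a primitive permutation representation is necessarily a maximal subgroup; hence $H_0$ is maximal in $G$. This forces $H_f=H_0$, and so $\mathbf{A}_0(\Gamma_0(\ell))=\C(j(\tau),f(\tau))=\mathbf{A}_0(\Gamma)(f(\tau))$, as claimed.
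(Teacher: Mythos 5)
Your argument is correct: the paper itself only recalls this theorem from \cite{ELGS} without reproducing a proof, and your route --- identifying $\gal(\mathbf{A}_0(\Gamma(\ell))/\C(j))$ with $\mathrm{PSL}_2(\F_\ell)$ via Lemma \ref{lemma:untergruppe_gal_gruppe}, reducing the claim to the maximality of the Borel-type subgroup $H_0$, and deducing that maximality from the $2$-transitivity (hence primitivity) of the action on $\mathbb{P}^1(\F_\ell)$ --- is exactly the standard argument one expects behind the cited result. All the intermediate steps check out ($H_0\subseteq H_f\subsetneq G$ because $f$ is $\Gamma_0(\ell)$-invariant but not in $\C(j)$, and the fixed field of $H_f$ is $\C(j,f)$ by the Galois correspondence), so nothing further is needed.
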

The following lemma from \cite[pp.~206--208]{Neukirch} will be used below.
\begin{lemma}\label{lem:neukirch-lemma}
Let $L/K$ be an extension of fields, $\OO \subseteq K$ be a ring. Let $\alpha \in L\backslash K$ have the minimal polynomial $f(X) \in K[X]$ of degree $n$. Then the $\OO$-module 
\[ 
C_{\alpha}=\{x \in L\mid \Tr_{L/K}(x\OO[\alpha]) \subseteq \OO\}
 \] 
has the $\OO$-basis 
\[ 
\left\{\frac{\alpha^i}{f^{\prime}(\alpha)}, i=0, \ldots, n-1\right\}.
 \]
\end{lemma}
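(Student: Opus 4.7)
The strategy is to identify $C_\alpha$ as the $\mathcal{O}$-dual of $\mathcal{O}[\alpha]$ under the trace pairing and to exhibit an explicit dual basis. Since $f(X)$ is the minimal polynomial of $\alpha$ over $K$, the elements $1, \alpha, \ldots, \alpha^{n-1}$ form a $K$-basis of $K(\alpha)$, and (under the standard hypothesis that $\alpha$ is integral over $\mathcal{O}$, so that $f \in \mathcal{O}[X]$) also an $\mathcal{O}$-basis of $\mathcal{O}[\alpha]$. The non-degeneracy of the trace form on the separable extension $K(\alpha)/K$ then reduces the claim to producing the $K$-dual basis $\{\beta_0, \ldots, \beta_{n-1}\}$ of $\{1, \alpha, \ldots, \alpha^{n-1}\}$ and showing that $\bigoplus_i \mathcal{O}\beta_i = \bigoplus_i \mathcal{O}\cdot\alpha^i/f'(\alpha)$.

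To produce the dual basis, I would factor $f(X) = (X-\alpha)g(X)$ in $K(\alpha)[X]$ and write $g(X) = \sum_{i=0}^{n-1} b_i X^i$. A direct Horner-type computation gives $b_{n-1}=1$ and, writing $f(X) = X^n + a_{n-1}X^{n-1} + \cdots + a_0$,
\[ b_i = \alpha^{n-1-i} + a_{n-1}\alpha^{n-2-i} + \cdots + a_{i+1}. \]
The key input is then the Lagrange interpolation identity
\[ \sum_{\sigma} \frac{f(X)\, \sigma(\alpha)^k}{(X-\sigma(\alpha))\,f'(\sigma(\alpha))} = X^k \qquad (0 \leq k \leq n-1), \]
where $\sigma$ runs over the $K$-embeddings of $K(\alpha)$ into an algebraic closure of $K$. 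Both sides are polynomials in $X$ of degree at most $n-1$ that agree at the $n$ distinct values $\sigma(\alpha)$, so equality holds. Expanding the left-hand side in powers of $X$ (using $f(X)/(X-\sigma(\alpha)) = g^\sigma(X) = \sum_i \sigma(b_i) X^i$) and comparing coefficients yields
\[ \Tr_{K(\alpha)/K}\!\left(\frac{b_i\alpha^k}{f'(\alpha)}\right) = \delta_{ik}, \]
so $\beta_i := b_i/f'(\alpha)$ is precisely the required dual basis.

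To replace this by the basis in the statement, observe that the formula for $b_i$ above shows that the transition matrix from $(b_{n-1}, b_{n-2}, \ldots, b_0)$ to $(1, \alpha, \ldots, \alpha^{n-1})$ is unitriangular with off-diagonal entries among the $a_j$, hence lies in $\operatorname{GL}_n(\mathcal{O})$. Dividing by $f'(\alpha)$, this same change of basis identifies $\bigoplus_i \mathcal{O}\cdot b_i/f'(\alpha)$ with $\bigoplus_i \mathcal{O}\cdot\alpha^i/f'(\alpha)$, yielding the claim.

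The main technical obstacle is establishing the Lagrange identity together with the coefficient-matching that turns it into the trace evaluation; the unitriangular change of basis afterwards is essentially bookkeeping, and crucially depends on the integrality assumption so that the $a_j$ belong to $\mathcal{O}$.
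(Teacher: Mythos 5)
Your proof is correct and is essentially the same argument as the one the paper relies on: the paper gives no proof of its own but cites Neukirch (pp.~206--208), whose proof is exactly your combination of the Euler/Lagrange interpolation identity (yielding $b_i/f'(\alpha)$ as the trace-dual basis of $1,\alpha,\dots,\alpha^{n-1}$, where $f(X)=(X-\alpha)\sum_i b_iX^i$) followed by the unitriangular change of basis over $\OO$. You are also right to flag that the statement as printed tacitly assumes $L=K(\alpha)$ separable and $\alpha$ integral over $\OO$; these hypotheses hold in the paper's application and are needed for the argument.
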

As a corollary one obtains
\begin{koro}\cite[Proposition 2.16]{ELGS}\label{koro:darstellung_besser}
Let $g(\tau) \in \mathbf{H}_0(\Gamma_0(\ell))\backslash\mathbf{H}_0(\Gamma)$ be a holomorphic modular function. Then $g(\tau)$ admits the following representation:

\[ 
g(\tau)=\frac{Q(m_\ell(\tau), j(\tau))}{m_\ell(\tau)^k\frac{\partial M_\ell}{\partial Y}(m_\ell(\tau), j(\tau))}
 \]
for a $k\geq 0$ and a polynomial $Q(X, Y) \in \C[X, Y]$ with $\deg2_Y(Q)<v=\deg2_Y(M_\ell)$.
\end{koro}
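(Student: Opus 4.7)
The plan is to apply Lemma~\ref{lem:neukirch-lemma} to the extension $L/K := \mathbf{A}_0(\Gamma_0(\ell))/\C(m_\ell(\tau))$ with $\alpha = j(\tau)$ and $\OO = \C[m_\ell, m_\ell^{-1}] \subseteq K$. By Theorem~\ref{satz:j_f_erzeugen_mod_funk} one has $L = \C(m_\ell, j)$, and the irreducibility of the canonical modular polynomial $M_\ell(X, Y) \in \C[X, Y]$ survives the specialisation $X \mapsto m_\ell$ of the transcendental variable $X$; hence $M_\ell(m_\ell, Y) \in K[Y]$ is the minimal polynomial of $j$ over $K$ up to its leading coefficient $\lambda(m_\ell) \in \C[m_\ell]$. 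The monic minimal polynomial $f(Y) = \lambda(m_\ell)^{-1} M_\ell(m_\ell, Y)$ therefore satisfies $f'(j) = \lambda(m_\ell)^{-1}\frac{\partial M_\ell}{\partial Y}(m_\ell, j)$, and Lemma~\ref{lem:neukirch-lemma} supplies the $\OO$-basis $\bigl\{\lambda(m_\ell)\,j^i / \frac{\partial M_\ell}{\partial Y}(m_\ell, j) : i = 0, \ldots, v-1\bigr\}$ of the complementary module $C_j$. Once $g \in C_j$ is established, one writes $g = \sum_{i=0}^{v-1} c_i \cdot j^i / \frac{\partial M_\ell}{\partial Y}(m_\ell, j)$ with coefficients $c_i \in \OO$, and clearing the negative powers of $m_\ell$ by a common factor $m_\ell^k$ yields the representation with $Q(X, Y) \in \C[X, Y]$ and $\deg2_Y Q < v$ claimed in the statement.

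The nontrivial ingredient is thus the membership $g \in C_j$, i.e., $\Tr_{L/K}(gh) \in \OO$ for every $h \in \OO[j]$, which I would establish via integral closure. Regarding $m_\ell$ as a finite morphism $\pi : X_0(\ell) \to \mathbb{P}^1$, the ring $\OO$ is the coordinate ring of $\mathbb{P}^1 \setminus \{0, \infty\}$, and its integral closure in $L$ is the coordinate ring $A$ of the affine open $U := \pi^{-1}(\mathbb{P}^1 \setminus \{0, \infty\}) \subseteq X_0(\ell)$. Because $\eta$ is nowhere zero on $\mathbb{H}$, the function $m_\ell$ has no zeros or poles there, so both cusps of $X_0(\ell)$ belong to $\pi^{-1}(\{0, \infty\})$; consequently $U$ sits inside the non-cuspidal locus $\Gamma_0(\ell) \backslash \mathbb{H}$. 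A holomorphic modular function $g \in \mathbf{H}_0(\Gamma_0(\ell))$ is regular on this locus, hence on $U$, and therefore lies in $A$; the same holds for $j$ and for every $h \in \OO[j]$. Thus $gh \in A$ and $\Tr_{L/K}(gh)$ lies in the integral closure of $\OO$ in $K$, which coincides with $\OO$ itself because $\C[m_\ell, m_\ell^{-1}]$ is a principal ideal domain.

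The principal obstacle is the clean identification of $A$ with the integral closure of $\OO$ in $L$ together with the placement of $g$ into $A$: one has to pin down what ``holomorphic'' is meant to mean in the definition of $\mathbf{H}_0(\Gamma_0(\ell))$ and verify that both cusps of $X_0(\ell)$ are actually captured by $\pi^{-1}(\{0, \infty\})$. Both statements are consequences of the explicit construction of $m_\ell$ from $\eta$ combined with $\eta \neq 0$ on $\mathbb{H}$, but they constitute the only non-formal input in an otherwise purely algebraic argument. A subsidiary bookkeeping point is the leading-coefficient scalar $\lambda(m_\ell)$ arising from the non-monic minimal polynomial of $j$; since $\lambda(X) \in \C[X]$, it is absorbed harmlessly into the coefficients $c_i$ and affects only the eventual choice of $k$.
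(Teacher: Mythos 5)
Your proposal is correct and follows essentially the route the paper intends: it applies Lemma~\ref{lem:neukirch-lemma} with $K=\C(m_\ell)$, $L=\C(m_\ell,j)=\mathbf{A}_0(\Gamma_0(\ell))$, $\alpha=j$ (minimal polynomial $M_\ell(m_\ell,Y)$ up to its leading coefficient) and $\OO=\C[m_\ell,m_\ell^{-1}]$, which is exactly why the statement is placed as a corollary of that lemma and why the factor $m_\ell^k$ and the bound $\deg_Y Q<v$ appear. Your verification that $g\in C_j$ via the integral closure of $\C[m_\ell,m_\ell^{-1}]$ in $L$ being $\mathbf{H}_0(\Gamma_0(\ell))$ (both cusps lying over $\{0,\infty\}$ since $\eta\neq 0$ on $\mathbb{H}$) is a sound packaging of the same holomorphy argument.
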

\section{Universal elliptic Gau{\ss} sums in the Atkin case}

We show how to use universal elliptic Gau{\ss} sums for computing the value of the trace $t$ of the Frobenius homomorphism modulo Atkin primes $\ell$.

\subsection{Definition}
We begin with the following result
\begin{lemma}\label{lem:wp_trafo_allgemein}
Let $\tau \in \mathbb{H}$, $v_1, v_2 \in \Z$ and $\gamma=\begin{pmatrix}
a & b \\ c & d
\end{pmatrix}
\in \SL_2(\Z)$. Then we have
\begin{align}
\left.\wp\left(\frac{v_1\tau+v_2}{\ell}, \tau\right)\right|_{\gamma}=&(c\tau+d)^2\wp\left(\frac{v_1(a\tau+b)+v_2(c\tau+d)}{\ell}, \tau\right),\\
\left.\wp^{\prime }\left(\frac{v_1\tau+v_2}{\ell}, \tau\right)\right|_{\gamma}=&(c\tau+d)^3\wp^{\prime}\left(\frac{v_1(a\tau+b)+v_2(c\tau+d)}{\ell}, \tau\right).
\end{align}
\end{lemma}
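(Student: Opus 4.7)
The plan is to read $h|_\gamma(\tau)$ as the substitution $\tau \mapsto \gamma\tau$ applied to the function $h(\tau):=\wp\!\left(\frac{v_1\tau+v_2}{\ell},\tau\right)$, and then reduce the claim to the classical transformation law of $\wp$ under $\SL_2(\Z)$.

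First I would record the two ingredients. On the one hand, the homogeneity of $\wp$ and $\wp^\prime$ in the lattice, namely $\wp(\lambda z, \lambda \Lambda) = \lambda^{-2}\wp(z,\Lambda)$ and $\wp^\prime(\lambda z, \lambda\Lambda) = \lambda^{-3}\wp^\prime(z,\Lambda)$ for every lattice $\Lambda\subset\C$ and $\lambda\in\C^\times$. On the other hand, writing $\Lambda(\tau):=\Z\tau+\Z$, I would observe that for $\gamma=\left(\begin{smallmatrix} a & b\\ c & d\end{smallmatrix}\right)\in\SL_2(\Z)$ the pair $(a\tau+b,\,c\tau+d)$ is another $\Z$-basis of $\Lambda(\tau)$, from which
\[
\Lambda(\gamma\tau) \;=\; \Z\cdot\tfrac{a\tau+b}{c\tau+d}+\Z \;=\; \tfrac{1}{c\tau+d}\bigl(\Z(a\tau+b)+\Z(c\tau+d)\bigr) \;=\; \tfrac{1}{c\tau+d}\,\Lambda(\tau).
\]

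Combining both ingredients with $\lambda=(c\tau+d)^{-1}$ yields the classical formulas
\[
\wp\!\left(\tfrac{z}{c\tau+d},\gamma\tau\right) = (c\tau+d)^2\,\wp(z,\tau),\qquad \wp^\prime\!\left(\tfrac{z}{c\tau+d},\gamma\tau\right) = (c\tau+d)^3\,\wp^\prime(z,\tau).
\]
I would then specialise to $z=\tfrac{v_1(a\tau+b)+v_2(c\tau+d)}{\ell}$; a one-line calculation gives $\tfrac{z}{c\tau+d}=\tfrac{v_1\gamma\tau+v_2}{\ell}$, so the left-hand sides become $\wp\!\left(\tfrac{v_1\gamma\tau+v_2}{\ell},\gamma\tau\right)=h(\gamma\tau)$ and the analogous expression with $\wp^\prime$, while the right-hand sides match the claimed formulas verbatim.

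No real obstacle arises: the statement is essentially a repackaging of the $\SL_2(\Z)$-transformation law of $\wp$ and $\wp^\prime$. The only point requiring care is disentangling how $|_\gamma$ acts on a function that depends on $\tau$ both as the period and implicitly inside the first argument; once this bookkeeping is set up, the rest is a direct substitution. There are no analytic subtleties, since $\wp$ and $\wp^\prime$ are meromorphic with poles only at lattice points, which are avoided at the non-zero $\ell$-division values considered here.
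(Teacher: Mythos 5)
Your proposal is correct and follows essentially the same route as the paper: both hinge on the identity $\wp(z,\gamma\tau)=(c\tau+d)^2\,\wp((c\tau+d)z,\tau)$ (and its analogue for $\wp^{\prime}$), followed by the substitution $z=\frac{v_1(a\tau+b)+v_2(c\tau+d)}{\ell}$. The only difference is presentational — you obtain the key identity from the homogeneity of $\wp$ together with the basis change $\Lambda(\gamma\tau)=(c\tau+d)^{-1}\Lambda(\tau)$, whereas the paper verifies the same fact by expanding the lattice sum directly and using $ad-bc=1$.
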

\begin{proof}
For $z \in \C$ we first compute
\begin{align*}
\wp\left(z, \frac{a\tau+b}{c\tau+d}\right)&=\frac{1}{z^2}+\sum_{n^2+m^2\neq 0}\left(\frac{1}{(z-(m+n\frac{a\tau+b}{c\tau+d}))^2} -\frac{1}{(m+n\frac{a\tau+b}{c\tau+d})^2} \right)\\
&=(c\tau+d)^2\cdot \frac{1}{((c\tau+d)z)^2}\\
&+(c\tau+d)^2\sum_{m^2+n^2\neq 0}\left( \frac{1}{((c\tau+d)z-S_{a,b,c,d}(m, n))^2} -\frac{1}{(S_{a,b,c,d}(m, n))^2} \right)\\
&=(c\tau+d)^2\wp((c\tau+d)z, \tau),
\end{align*}
where we make use of the abbreviation $S_{a,b,c,d}(m, n)=m(c\tau+d)+n(a\tau+b)$ and the last equation follows from $ad-bc=1$. From this we obtain
\begin{align*}
\left.\wp\left(\frac{v_1\tau+v_2}{\ell}, \tau\right)\right|_{\gamma}
=\wp\left(\frac{v_1\frac{a\tau+b}{c\tau+d}+v_2}{\ell}, \frac{a\tau+b}{c\tau+d}\right)
=(c\tau+d)^2\wp\left(\frac{v_1(a\tau+b)+v_2(c\tau+d)}{\ell}, \tau\right).
\end{align*}
The proof for the derivative of $\wp$ proceeds analogously.
\end{proof}

Using this statement we will construct a modular function for the group $\Gamma_0^0(\ell)$ we now define.
\begin{lemma}\label{lem:Gamma_0_0_def}
Let $\ell$ be a prime. Then
\[ 
\Gamma_0^0(\ell)=
\left\{\begin{pmatrix}
a & b\\
c & d\\
\end{pmatrix}: b\equiv c\equiv 0 \mod \ell \right\} \subseteq \Gamma
 \]
is a subgroup of $\Gamma$ and we have the inclusions $\Gamma(\ell)\subseteq \Gamma_0^0(\ell) \subseteq \Gamma_0(\ell)$. A system of representatives for $\Gamma/\Gamma_0^0(\ell)$ is constituted by
\[ 
S_{\lambda, k}=\begin{pmatrix}
\lambda & -1+\lambda k \\ 1 & k
\end{pmatrix}\quad \text{for}\quad 0 \leq \lambda, k < \ell,\quad
S_{\lambda, \ell}=\begin{pmatrix}
1 & \lambda \\
0 & 1
\end{pmatrix}
\quad \text{for}\quad 0\leq \lambda <\ell.
 \]
\end{lemma}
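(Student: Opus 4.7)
The plan is to verify the three assertions of the lemma in sequence: the subgroup property, the chain of inclusions, and finally the list of coset representatives.

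The first two parts are direct. That $\Gamma_0^0(\ell)$ is closed under multiplication follows by computing the off-diagonal entries of a product $gh$ with $g,h \in \Gamma_0^0(\ell)$: they equal $ab' + bd'$ and $ca' + dc'$, which are both divisible by $\ell$ since all of $b, c, b', c'$ are. Closure under inversion is immediate from the explicit formula $g^{-1} = \begin{pmatrix} d & -b \\ -c & a\end{pmatrix}$, so $\Gamma_0^0(\ell) \leq \Gamma$. The inclusions $\Gamma(\ell)\subseteq \Gamma_0^0(\ell) \subseteq \Gamma_0(\ell)$ are a direct comparison of the defining congruences.

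For the list of representatives, I would first determine the index $[\Gamma:\Gamma_0^0(\ell)]$. Reducing modulo $\ell$ one obtains $\Gamma_0^0(\ell)/\Gamma(\ell) \cong D$, where $D \leq \SL_2(\F_\ell)$ is the subgroup of diagonal matrices, which has order $\ell-1$. Combined with the well-known cardinality $|\SL_2(\F_\ell)| = \ell(\ell^2-1)$ this yields $[\Gamma:\Gamma_0^0(\ell)] = \ell(\ell+1)$, which matches the number of matrices listed ($\ell^2$ of the form $S_{\lambda,k}$ plus $\ell$ of the form $S_{\lambda,\ell}$). Next, a short computation shows $\det S_{\lambda,k} = \lambda k - (\lambda k - 1) = 1$ and $\det S_{\lambda,\ell} = 1$, so each matrix really lies in $\Gamma$.

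The key remaining step, and the one requiring a clean argument, is to show that the $\ell^2+\ell$ matrices represent distinct cosets. For this I would use the invariant obtained by sending $g = \begin{pmatrix} a & b \\ c & d\end{pmatrix} \in \Gamma$ to the ordered pair of projective points $\bigl((a:c), (b:d)\bigr) \in \mathbb{P}^1(\F_\ell)\times \mathbb{P}^1(\F_\ell)$. This is well defined because $ad-bc=1$ prevents any column from reducing to $(0,0)$, and the same relation forces the two points to be distinct. A direct multiplication shows that replacing $g$ by $gh$ with $h \in \Gamma_0^0(\ell)$ scales the columns of $g$ mod $\ell$, so the pair of projective points depends only on the left coset $g\,\Gamma_0^0(\ell)$. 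Evaluating on the listed matrices, one finds that $S_{\lambda,k}$ for $1 \leq k < \ell$ yields the pairs $\bigl((\lambda:1),(\lambda-k^{-1}:1)\bigr)$, covering all pairs of distinct finite points; $S_{\lambda,0}$ yields $\bigl((\lambda:1),(1:0)\bigr)$; and $S_{\lambda,\ell}$ yields $\bigl((1:0),(\lambda:1)\bigr)$. Since $\mathbb{P}^1(\F_\ell)$ has exactly $\ell+1$ elements, these $\ell^2+\ell$ pairs are precisely all ordered pairs of distinct points, so in particular they are pairwise distinct. Hence the given matrices lie in pairwise different cosets, and by the index count they exhaust $\Gamma/\Gamma_0^0(\ell)$.
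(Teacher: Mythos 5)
Your proof is correct, but it takes a genuinely different route from the paper. The paper argues via the tower $\Gamma \supseteq \Gamma_0(\ell) \supseteq \Gamma_0^0(\ell)$: it observes that the matrices $T_\lambda$ represent the cosets of $\Gamma_0(\ell)/\Gamma_0^0(\ell)$, takes the known representatives $S_k$ of $\Gamma/\Gamma_0(\ell)$ from the literature, and multiplies the two systems to obtain $S_{\lambda,k}=T_\lambda S_k$; this is shorter and also explains where the particular shape of the $S_{\lambda,k}$ comes from. You instead compute the index $[\Gamma:\Gamma_0^0(\ell)]=\ell(\ell+1)$ by reducing modulo $\ell$ (identifying $\Gamma_0^0(\ell)$ as the preimage of the diagonal torus in $\SL_2(\F_\ell)$) and then separate the listed matrices into distinct left cosets by the invariant $g\mapsto\bigl((a:c),(b:d)\bigr)$, checking that right multiplication by $\Gamma_0^0(\ell)$ only rescales the two columns mod $\ell$. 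All the individual verifications are sound: the determinant relation $ad-bc=1$ guarantees the two projective points are defined and distinct, and your enumeration does produce each of the $\ell^2+\ell$ ordered pairs of distinct points of $\mathbb{P}^1(\F_\ell)$ exactly once, which together with the index count finishes the argument. What your approach buys is self-containedness (no appeal to a known coset decomposition of $\Gamma/\Gamma_0(\ell)$) and a conceptual identification of $\Gamma/\Gamma_0^0(\ell)$ with ordered pairs of distinct points of $\mathbb{P}^1(\F_\ell)$, i.e.\ pairs of independent lines in $\F_\ell^2$, which matches the modular interpretation of $\Gamma_0^0(\ell)$ as fixing two distinct cyclic $\ell$-subgroups; the cost is a somewhat longer verification than the paper's two-line tower argument.
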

\begin{proof}
The fact that $\Gamma_0^0(\ell)$ is a group and the inclusions are obvious. To prove the correctness of the system of representatives we remark that the matrices
\begin{equation}\label{eq:def_matrix_T}
T_{\lambda}=\begin{pmatrix}
1 & \lambda \\ 0 & 1
\end{pmatrix}\quad \text{for}\quad 0 \leq \lambda < \ell
 \end{equation}
represent the cosets $\Gamma_0(\ell)/\Gamma_0^0(\ell)$, as is easily seen. Multiplying the $T_{\lambda}$ by the matrices 
\begin{equation}\label{eq:def_rep_Sk}
S_k=\begin{pmatrix}
0 & -1 \\ 1 & k
\end{pmatrix}\quad \text{for}\quad 0\leq k< \ell,\quad
S_\ell=\begin{pmatrix}
1 & 0 \\ 0 & 1
\end{pmatrix}
 \end{equation}
from \cite[p.~54]{Mueller} that form a system of representatives for $\Gamma/\Gamma_0(\ell)$ one obtains the claim.
\end{proof}


\begin{koro}\label{koro:gs_atkin_def}
Let $\ell$ be a prime, $n \mid \ell-1$ and $\chi: \F_\ell^* \rightarrow \mu_n$ be a character of order $n$. Let the $\ell$-th root of unity $\xi$ be the image under the Weil pairing $e_\ell$ of the $\ell$-torsion points $P=(x(\zeta_\ell, q), y(\zeta_\ell, q))$, $Q_0=(x(q^{\frac{1}{\ell}}, q), y(q^{\frac{1}{\ell}}, q))$ on the Tate curve, and let $G_{\chi^{-1}}(\xi)=\sum_{\lambda \in \F_\ell^*}\chi^{-1}(\lambda)\xi^{\lambda}$ be the corresponding cyclotomic Gau{\ss} sum. Define
\[ 
G_{\ell, n, \chi}(q)=\sum_{\lambda \in \F_\ell^*} \chi(\lambda)V(\zeta_\ell^{\lambda}, q)=\sum_{\lambda \in \F_\ell^*} \chi(\lambda)(\lambda P)_V 
\quad\text{for}\quad V=
\begin{cases}
x,\quad n\equiv 1 \mod 2,\\
y, \quad n\equiv 0 \mod 2.
\end{cases}
 \]
and
\[
H_{\ell, n, \chi}(q)=\sum_{\lambda \in \F_\ell^*} \chi(\lambda)V(q^{\frac{\lambda}{\ell}}, q)=\sum_{\lambda \in \F_\ell^*}\chi(\lambda)(\lambda Q_0)_V.
 \]
Then the function 
\[ 
\sigma_{\ell, n, \chi}(q)=\frac{G_{\ell, n, \chi}(q)H_{\ell, n, \chi}(q)p_1(q)^rG_{\chi^{-1}}(\xi)}{\Delta(q)}\quad\text{for}\quad
r=\begin{cases}
4, \quad n \equiv 1 \mod 2,\\
3, \quad n \equiv 0 \mod 2,
\end{cases}
 \]
which we will call a \textup{universal elliptic Gau{\ss} sum (for Atkin primes)},
exhibits the following properties:
\begin{enumerate}
\item $\sigma_{\ell, n, \chi}(q)$ is a modular function of weight $0$ for the group $\Gamma_0^0(\ell)$.
\item $\sigma_{\ell, n, \chi}(q)$ is holomorphic on $\mathbb{H}$.
\item $\sigma_{\ell, n, \chi}(q)$ is invariant under transformations of the form $(P, Q_0) \mapsto (aP, bQ_0)$ for values $a, b \in \F_\ell^*$.
\item $\sigma_{\ell, n, \chi}(q)$ has coefficients in $\Q[\zeta_n]$.
\end{enumerate}
\end{koro}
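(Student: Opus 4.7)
The plan is to verify the four assertions in order; property~(1) carries the main work, and the remaining three follow from short re-indexings of similar flavor.

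For~(1), fix $\gamma=\left(\begin{smallmatrix} a & b \\ c & d\end{smallmatrix}\right)\in \Gamma_0^0(\ell)$ and apply Lemma~\ref{lem:wp_trafo_allgemein}. Recognising $x(\zeta_\ell^{\lambda},q)=\frac{1}{(2\pi i)^2}\wp(\lambda/\ell,\tau)$ as the case $(v_1,v_2)=(0,\lambda)$, and $x(q^{\lambda/\ell},q)$ as $(v_1,v_2)=(\lambda,0)$, with analogous $\wp^\prime$-statements for $y$, the lemma rewrites the transformed arguments as $(c\tau+d)\lambda/\ell$ and $(a\tau+b)\lambda/\ell$. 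The congruences $\ell\mid b,c$ allow dropping the lattice translates $c\lambda\tau/\ell,b\lambda/\ell\in \Z+\tau\Z$, leaving $d\lambda/\ell$ and $a\lambda\tau/\ell$ respectively. Re-indexing $\lambda\mapsto d^{-1}\lambda$ (respectively $\lambda\mapsto a^{-1}\lambda$) then yields
\[
G_{\ell,n,\chi}\big|_{\gamma}=(c\tau+d)^k\chi(d)^{-1}G_{\ell,n,\chi},\qquad H_{\ell,n,\chi}\big|_{\gamma}=(c\tau+d)^k\chi(a)^{-1}H_{\ell,n,\chi},
\]
with $k=2$ when $V=x$ (i.e.\ $n$ odd) and $k=3$ when $V=y$ ($n$ even). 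Because $\det\gamma=1$ and $bc\equiv 0\pmod{\ell^2}$, we have $ad\equiv 1\pmod{\ell}$, so $\chi(ad)^{-1}=1$ and $GH$ transforms with a pure weight factor $(c\tau+d)^{2k}$. Applying the same analysis to the torus representatives of $P$ and $Q_0$ shows that $P\mapsto dP$ and $Q_0\mapsto aQ_0$ under $\gamma$, and bilinearity of the Weil pairing gives $\xi\mapsto\xi^{ad}=\xi$, so $G_{\chi^{-1}}(\xi)$ is $\Gamma_0^0(\ell)$-invariant. Combined with $p_1^r$ (weight $2r$ for $\Gamma_0(\ell)\supseteq\Gamma_0^0(\ell)$) and $\Delta^{-1}$ (weight $-12$), the total weight is $2k+2r-12$, which is zero in both parity cases.

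Part~(2) is immediate: $\wp,\wp^\prime,p_1$ are holomorphic on $\mathbb{H}$ away from lattice points, and the arguments $\lambda/\ell,\lambda\tau/\ell$ for $1\le\lambda\le\ell-1$ avoid $\Z+\tau\Z$; moreover $\Delta\neq 0$ on $\mathbb{H}$ and $G_{\chi^{-1}}(\xi)$ is a scalar. For~(3), the substitution $(P,Q_0)\mapsto(aP,bQ_0)$ sends $G\mapsto\chi(a)^{-1}G$, $H\mapsto\chi(b)^{-1}H$, and $\xi\mapsto\xi^{ab}$, whence $G_{\chi^{-1}}(\xi)\mapsto\chi(ab)G_{\chi^{-1}}(\xi)$, and the three factors cancel. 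Part~(4) is a descent argument: $n\mid\ell-1$ forces $\gcd(n,\ell)=1$, so $\gal(\Q(\zeta_n,\zeta_\ell)/\Q(\zeta_n))\cong(\Z/\ell\Z)^*$, and for $\sigma_a$ in this group the relabeling $\zeta_\ell\mapsto\zeta_\ell^a$ fixes $\chi$ while sending $G\mapsto\chi(a)^{-1}G$ and $G_{\chi^{-1}}(\xi)\mapsto\chi(a)G_{\chi^{-1}}(\xi)$; the remaining factors $H,p_1,\Delta$ have rational $q$-series coefficients and are thus $\sigma_a$-fixed, so the product is Galois-invariant and descends to $\Q(\zeta_n)$.

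The key delicate point is in~(1): the congruences $b\equiv c\equiv 0\pmod{\ell}$ must be used three times in compatible ways---to discard the lattice translates in the $\wp$-arguments, to sharpen $ad-bc=1$ to $ad\equiv 1\pmod{\ell}$ so that both the character factor $\chi(ad)^{-1}$ and the Weil-pairing exponent collapse, and to guarantee invariance of $\xi$ itself. This triple use is precisely what forces the choice of the subgroup $\Gamma_0^0(\ell)$ rather than the larger $\Gamma_0(\ell)$.
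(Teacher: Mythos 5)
Your verification of the transformation behaviour in (1) and your arguments for (2), (3) and (4) are correct and essentially coincide with the paper's proof: the same use of Lemma \ref{lem:wp_trafo_allgemein} with $(v_1,v_2)=(0,\lambda)$ and $(\lambda,0)$, the same re-indexing producing the factors $\chi^{-1}(d)$ and $\chi^{-1}(a)$, the same weight count $2e+2r-12=0$, and the same Galois-descent argument for (4). (Minor slip there: the coefficients of $H_{\ell,n,\chi}$ are not rational but lie in $\Q[\zeta_n]$, via the values $\chi(\lambda)$; they are still fixed by every element of $\gal(\Q(\zeta_n,\zeta_\ell)/\Q(\zeta_n))$, so your conclusion stands.)

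The genuine gap is in (1): you stop after checking the weight-$0$ transformation law under $\Gamma_0^0(\ell)$ and holomorphy on $\mathbb{H}$, but the definition of a modular function used here (equation \eqref{eq:def_mod_funk} together with the ``technical conditions'') also requires meromorphy at the cusps. Concretely, one must show that for each coset representative $S_{\lambda,k}$ of $\Gamma/\Gamma_0^0(\ell)$ from Lemma \ref{lem:Gamma_0_0_def} the Fourier expansion of $\left.\sigma_{\ell,n,\chi}\right|_{S_{\lambda,k}}$ contains only finitely many negative exponents. The paper does this by applying Lemma \ref{lem:wp_trafo_allgemein} to an arbitrary $\gamma\in\Gamma$, which turns each $x(q^{v_1/\ell}\zeta_\ell^{v_2},q)$ into $(c\tau+d)^2x(q^{(v_1a+v_2c)/\ell}\zeta_\ell^{v_1b+v_2d},q)$, and then reading off from \eqref{eq:x_def} and \eqref{eq:y_def} that each such series, hence also $G_{\ell,n,\chi}$, $H_{\ell,n,\chi}$, $p_1$ and finally $\sigma_{\ell,n,\chi}$, has only finitely many negative exponents in its $q$-expansion. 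This step is not cosmetic: everything in Section \ref{sec:sigma_darstellung} (membership in $\mathbf{A}_0(\Gamma_0^0(\ell))$, the trace argument of Lemma \ref{lem:darst_sigma_allgemein}, and the precision bounds of Lemma \ref{lem:sigma_prec}, which rest on the orders at the cusps being finite) depends on it. You should add this verification to complete (1).
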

\begin{proof}
\begin{enumerate}
\item Using the lemma \ref{lem:wp_trafo_allgemein} we have just shown for $\gamma=
\left( \begin{smallmatrix}
a & b \\
c & d
\end{smallmatrix}\right)
 \in \Gamma_0^0(\ell)$ and $v_1 \in \F_\ell^*$ we calculate
\[ 
\left.x\left(q^{\frac{v_1}{\ell}}, q\right)\right|_\gamma
=\left.k\wp\left(\frac{v_1\tau}{\ell}, \tau\right)\right|_{\gamma}
=k(c\tau+d)^2\wp\left(\frac{v_1(a\tau+b)}{\ell}, \tau\right)
=(c\tau+d)^2x\left(q^{\frac{v_1a}{\ell}}, q\right).
\]
We employ formulae \eqref{eq:x_def}, \eqref{eq:y_def} (with $k=\frac{1}{(2\pi i)^2})$ for the transition to the Weierstra{\ss} $\wp$-function (nota bene that $|q|<|q^{\frac{v_1}{\ell}}|<1$ holds) and use $b\equiv 0 \mod \ell$ in the last step. In an analogue way one shows
\[ 
\left.y\left(q^{\frac{v_1}{\ell}}, q\right)\right|_\gamma=(c\tau+d)^3y\left(q^{\frac{v_1a}{\ell}}, q\right).
 \]
This in turn allows to determine the transformation behaviour of $H_{\ell, n, \chi}(q)$ under action of $\gamma$, viz.
\begin{align*} 
\left.H_{\ell, n, \chi}(q)\right|_{\gamma}=&\sum_{\lambda \in \F_{\ell}^*}\chi(\lambda)\left.V\left(q^{\frac{\lambda}{\ell}}, q\right)\right|_{\gamma}
=(c\tau+d)^e\sum_{\lambda \in \F_{\ell}^*}\chi(\lambda)V\left(q^{\frac{\lambda a}{\ell}}, q\right)\\
=&(c\tau+d)^e\chi^{-1}(a)H_{\ell, n, \chi}(q),
 \end{align*}
where $e=2$ holds for $n\equiv 1 \mod 2$ and $e=3$ otherwise. Similarly, one can show $\left.G_{\ell, n, \chi}(q)\right|_{\gamma}=(c\tau+d)^e\chi^{-1}(d)G_{\ell, n, \chi}(q)$. Using our knowledge of the transformation behaviour of the remaining functions occurring in the definition of $\sigma_{\ell, n, \chi}(q)$ we finally obtain
\[ 
\left.\sigma_{\ell, n, \chi}(q)\right|_{\gamma}
=\sigma_{\ell, n, \chi}(q)\frac{(c\tau+d)^{2e+2r}\chi^{-1}(d)\chi^{-1}(a)}{(c\tau+d)^{12}}
=\sigma_{\ell, n, \chi}(q)\chi^{-1}(ad)
=\sigma_{\ell, n, \chi}(q).
 \]
In the last step we have used $ad \equiv 1 \mod \ell$, which follows from $b\equiv c\equiv 0 \mod \ell$.\\
It remains to show that $\sigma_{\ell, n, \chi}(q)$ is meromorphic at the cusps, i.~e. that the Fourier expansion of $\left.\sigma_{\ell, n, \chi}(q)\right|_{S_{\lambda, k}}$ with the matrices $S_{\lambda, k}$ from lemma \ref{lem:Gamma_0_0_def} contains only finitely many negative exponents. However, for $\gamma=\left( \begin{smallmatrix}
a & b\\ c & d
\end{smallmatrix}\right) \in \Gamma$ lemma \ref{lem:wp_trafo_allgemein} implies
\[ 
\left.x\left(q^\frac{v_1}{\ell}\zeta_{\ell}^{v_2}, q\right)\right|_{\gamma}=(c\tau+d)^2x\left(q^{\frac{v_1a+v_2c}{\ell}}\zeta_{\ell}^{v_1b+v_2d}, q\right)
\]
and an analogue statement holds for $y(\cdot, q)$. When computing the arising expression using formulae \eqref{eq:x_def} and \eqref{eq:y_def}, respectively, one immediately sees the $q$-expansion contains only finitely many negative exponents. From the definition of these expressions the same holds true for $\left.G_{\ell, n, \chi}(q)\right|_{S_{\lambda, k}}$, $\left.H_{\ell, n, \chi}(q)\right|_{S_{\lambda, k}}$ and $\left.p_1(q)\right|_{S_{\lambda, k}}$ and, finally, for $\left.\sigma_{\ell, n, \chi}(q)\right|_{S_{\lambda, k}}$.

\item Formula \eqref{eq:x_def} directly implies that $x(q^{\frac{v}{\ell}}, q)$ has poles if and only if $q^n=1$ or $q^n=q^{\frac{v}{\ell}}$ holds for $n \in \Z$. However, $\tau \in \mathbb{H}$ yields $|q|<1$, whereas the second equality cannot hold for $0<v<\ell$. Hence, $H_{\ell, n, \chi}(q)$ is holomorphic on $\mathbb{H}$ by construction. In the same way one sees that $x(\zeta_\ell^v, q)$ is holomorphic. Using our knowledge about the remaining functions to be considered the claim is immediate.
\item First, $(aP, bQ_0)=((x, y)(\zeta_{\ell}^a, q), (x, y)(q^{\frac{b}{\ell}}, q))$ holds. Obviously both $\Delta(q)$ and $p_1(q)$ are invariant under this transformation. Furthermore, one obtains
\[ 
\sum_{\lambda \in \F_{\ell}^*} \chi(\lambda)V(q^{\frac{b\lambda}{\ell}}, q)
=\chi^{-1}(b)\sum_{\lambda \in \F_{\ell}^*} \chi(b\lambda)V(q^{\frac{b\lambda}{\ell}}, q)
=\chi^{-1}(b)H_{\ell, n, \chi}(q)
 \]
and in the same vein $\sum_{\lambda \in \F_{\ell}^*} \chi(\lambda)V(\zeta_{\ell}^{a\lambda}, q)=\chi^{-1}(a)G_{\ell, n, \chi}(q)$. In addition the term $G_{\chi^{-1}}(\xi)$ transforms according to 
\[ 
G_{\chi^{-1}}(e_{\ell}(aP, bQ_0))=G_{\chi^{-1}}(e_{\ell}(P, Q_0)^{ab})=G_{\chi^{-1}}(\xi^{ab})=\chi(ab)G_{\chi^{-1}}(\xi),
 \]
as follows from the properties of the Weil pairing and cyclotomic Gau{\ss} sums. By multiplying the arising factors we see that $\sigma_{\ell, n, \chi}(q)$ is invariant under the transformation.
\item This follows from what we have already proven. Obviously the coefficients of $\sigma_{\ell, n, \chi}(q)$ lie in $\Q[\zeta_{\ell}, \zeta_n]$. We choose a generator $c$ of $\F_{\ell}^*$ and consider the action of the homomorphism $\sigma: \zeta_{\ell} \mapsto \zeta_{\ell}^c$ generating $\gal(\Q[\zeta_{\ell}, \zeta_n]/\Q[\zeta_n])$ on $\sigma_{\ell, n, \chi}(q)$. We calculate
\[ 
\sigma(G_{\ell, n, \chi}(q))=\sum_{\lambda \in \F_\ell^*} \chi(\lambda)\sigma(V(\zeta_\ell^{\lambda}, q))
=\sum_{\lambda \in \F_\ell^*} \chi(\lambda)V(\zeta_\ell^{c\lambda}, q)
=\chi^{-1}(c)G_{\ell, n, \chi}(q)
 \]
and in the same way one obtains
\[ 
\sigma(G_{\chi^{-1}}(\xi))=\chi(c)G_{\chi^{-1}}(\xi).
 \]
Since the remaining terms in the definition of $\sigma_{\ell, n, \chi}(q)$ are invariant under $\sigma$, the coefficients of $\sigma_{\ell, n, \chi}(q)$ lie in the fixed field of this homomorphism.
\end{enumerate}
\end{proof}


\subsection{Rational representation}\label{sec:sigma_darstellung}

We now wish to represent the expression $\sigma_{\ell, n}(q)$ in terms of $j(\tau)$ and other modular functions. Actually, our representation will depend on two other modular functions apart from the $j$-invariant. This results from the following
\begin{lemma}\label{lem:erzeuger_koerper_g_0_0}
Let $f(\tau) \in \mathbf{A}_0(\Gamma_0(\ell))\backslash \mathbf{A}_0(\Gamma)$ be a modular function of weight $0$ and let  the matrix $S_0=\left(\begin{smallmatrix}
0 & -1 \\ 1 & 0
\end{smallmatrix}\right)$ be as in \eqref{eq:def_rep_Sk}. Then we obtain
\begin{equation}\label{eq:koerper_gamma_0_0}
\mathbf{A}_0(\Gamma_0^0(\ell))=\C(j(\tau), f(\tau), f(S_0\tau)).
 \end{equation}
\end{lemma}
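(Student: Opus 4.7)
The plan is to combine a degree count with a Galois-theoretic identification of the stabilizer of $f$. Lemma~\ref{lem:Gamma_0_0_def} supplies representatives showing $[\Gamma:\Gamma_0^0(\ell)] = \ell(\ell+1)$, and since $-I\in\Gamma_0^0(\ell)$ one has $\pm\Gamma_0^0(\ell) = \Gamma_0^0(\ell)$, so Lemma~\ref{lemma:untergruppe_gal_gruppe} gives $[\mathbf{A}_0(\Gamma_0^0(\ell)):\C(j)] = \ell(\ell+1)$. Theorem~\ref{satz:j_f_erzeugen_mod_funk} provides $[\mathbf{A}_0(\Gamma_0(\ell)):\C(j)] = \ell+1$, so the intermediate extension
\[
\mathbf{A}_0(\Gamma_0^0(\ell))\,/\,\mathbf{A}_0(\Gamma_0(\ell)) = \C(j,f(\tau))
\]
has prime degree $\ell$. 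Consequently any element of $\mathbf{A}_0(\Gamma_0^0(\ell))\setminus\C(j,f(\tau))$ already generates the whole extension, and it suffices to show that $f(S_0\tau)$ is such an element.

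For $f(S_0\tau)\in\mathbf{A}_0(\Gamma_0^0(\ell))$ I would use the direct computation
\[
S_0\begin{pmatrix} a & b\\ c & d\end{pmatrix}S_0^{-1}=\begin{pmatrix} d & -c\\ -b & a\end{pmatrix},
\]
which for $\gamma=\left(\begin{smallmatrix} a & b\\ c & d\end{smallmatrix}\right)\in\Gamma_0^0(\ell)$ still has both off-diagonal entries divisible by $\ell$ and hence lies in $\Gamma_0^0(\ell)\subseteq\Gamma_0(\ell)$. The $\Gamma_0(\ell)$-invariance of $f$ then yields $f(S_0\gamma\tau) = f((S_0\gamma S_0^{-1})S_0\tau) = f(S_0\tau)$, which is the required invariance under $\Gamma_0^0(\ell)$.

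The main obstacle is the reverse direction $f(S_0\tau)\notin\mathbf{A}_0(\Gamma_0(\ell))$. I would argue by contradiction: assuming $\Gamma_0(\ell)$-invariance of $f(S_0\tau)$, the same conjugation identity forces $S_0\gamma S_0^{-1}$ to lie in the stabilizer $H\subseteq\Gamma$ of $f$ for every $\gamma\in\Gamma_0(\ell)$. The key point is to first pin down $H$ exactly as $\pm\Gamma_0(\ell)$. Working inside the Galois extension $\mathbf{A}_0(\Gamma(\ell))/\C(j)$ furnished by Lemma~\ref{lemma:untergruppe_gal_gruppe}, any $H\supsetneq\pm\Gamma_0(\ell)$ would contain $\pm\Gamma(\ell)$, and its fixed field $\mathbf{A}_0(H)$ would be a proper subfield of $\mathbf{A}_0(\Gamma_0(\ell))$ still containing $\C(j,f(\tau))$, contradicting $\C(j,f(\tau))=\mathbf{A}_0(\Gamma_0(\ell))$ from Theorem~\ref{satz:j_f_erzeugen_mod_funk}.

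Once $H=\pm\Gamma_0(\ell)$ is established, one only needs to exhibit a single $\gamma\in\Gamma_0(\ell)$ whose $S_0$-conjugate escapes $\pm\Gamma_0(\ell)$. The choice $\gamma=\left(\begin{smallmatrix} 1 & 1\\ 0 & 1\end{smallmatrix}\right)\in\Gamma_0(\ell)$ yields $S_0\gamma S_0^{-1}=\left(\begin{smallmatrix} 1 & 0\\ -1 & 1\end{smallmatrix}\right)$, whose lower-left entry $-1$ is not congruent to $0$ modulo $\ell$, so neither this matrix nor its negative belongs to $\Gamma_0(\ell)$. This contradiction completes the argument and establishes~\eqref{eq:koerper_gamma_0_0}.
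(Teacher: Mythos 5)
Your proof is correct, but it takes a genuinely different route from the paper. The paper first observes $\Gamma_0^0(\ell)=\Gamma_0(\ell)\cap\Gamma^0(\ell)$ with $\Gamma^0(\ell)=\{b\equiv 0 \bmod \ell\}$, converts this via Lemma~\ref{lemma:untergruppe_gal_gruppe} and the Galois correspondence into the compositum identity $\mathbf{A}_0(\Gamma_0^0(\ell))=\mathbf{A}_0(\Gamma_0(\ell))\,\mathbf{A}_0(\Gamma^0(\ell))$, and then uses $\Gamma^0(\ell)=S_0^{-1}\Gamma_0(\ell)S_0$ to transport Theorem~\ref{satz:j_f_erzeugen_mod_funk} and conclude $\mathbf{A}_0(\Gamma^0(\ell))=\C(j(\tau),f(S_0\tau))$; the two generators $f(\tau)$ and $f(S_0\tau)$ then appear symmetrically and no degree count is needed. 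You instead exploit that the relative degree $[\mathbf{A}_0(\Gamma_0^0(\ell)):\mathbf{A}_0(\Gamma_0(\ell))]=\ell$ is prime, so it suffices to show that $f(S_0\tau)$ lies in the big field but not in the small one; your verification of both facts (the conjugation computation $S_0\gamma S_0^{-1}=\left(\begin{smallmatrix} d & -c\\ -b & a\end{smallmatrix}\right)$, the identification of the stabilizer of $f$ as $\pm\Gamma_0(\ell)$ via Theorem~\ref{satz:j_f_erzeugen_mod_funk}, and the explicit witness $\left(\begin{smallmatrix}1&1\\0&1\end{smallmatrix}\right)$) is sound. Your argument buys a more elementary finish at the cost of relying on the primality of $\ell$ and on the explicit index $\ell(\ell+1)$ from Lemma~\ref{lem:Gamma_0_0_def}, whereas the paper's compositum argument is more structural, identifies $\mathbf{A}_0(\Gamma^0(\ell))$ as an object of independent interest, and would survive in situations where the relative degree is composite.
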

\begin{proof}
We first remark that
\begin{equation}\label{eq:gamma_0_oben}
\Gamma_0^0(\ell)=\Gamma_0(\ell)\cap \Gamma^0(\ell)\quad \text{with}\quad \Gamma^0(\ell)=\left\{ \begin{pmatrix}
a & b \\ c & d
\end{pmatrix}: b \equiv 0 \mod \ell  \right\}
 \end{equation}
holds. Now lemma \ref{lemma:untergruppe_gal_gruppe}) yields
\[ 
\gal(\mathbf{A}_0(\Gamma(\ell))/\mathbf{A}_0(\Gamma_0^0(\ell)))=\gal(\mathbf{A}_0(\Gamma(\ell))/\mathbf{A}_0(\Gamma_0(\ell)))\cap \gal(\mathbf{A}_0(\Gamma(\ell))/\mathbf{A}_0(\Gamma^0(\ell))),
 \]
which directly implies
\begin{equation}\label{eq:koerper_kompositum}
\mathbf{A}_0(\Gamma_0^0(\ell))=\mathbf{A}_0(\Gamma_0(\ell))\mathbf{A}_0(\Gamma^0(\ell)).
 \end{equation}
Furthermore, for $a, b, c, d \in \Z$ we compute
\[
S_0^{-1}
\begin{pmatrix}
a & b \\ c & d
\end{pmatrix}
S_0
=\begin{pmatrix}
d & -c \\ -b & a
\end{pmatrix},
 \]
which yields $\Gamma^0(\ell)=S_0^{-1}\Gamma_0(\ell)S_0$. From this we deduce that the function $f(S_0\tau)$ lies in $\mathbf{A}_0(\Gamma^0(\ell))$. Namely, writing $\gamma \in \Gamma^0(\ell)$ as $\gamma=S_0^{-1}\gamma^\prime S_0$ for some $\gamma^{\prime} \in \Gamma_0(\ell)$, we obtain
\begin{equation}\label{eq:wechsel_gamma_0_gamma_0_oben}
f(S_0\gamma\tau)=f(\gamma^{\prime}S_0\tau)=f(S_0\tau).
 \end{equation}
According to theorem \ref{satz:j_f_erzeugen_mod_funk} $\mathbf{A}_0(\Gamma_0(\ell))=\C(j(\tau), f(\tau))$ holds. Adapting its proof we directly see $\mathbf{A}_0(\Gamma^0(\ell))=\C(j(\tau), f(S_0\tau))$, which yields the claim together with equation \eqref{eq:koerper_kompositum}.
\end{proof}

Furthermore, we obtain
\begin{lemma}\label{lem:darst_sigma_allgemein}
Let $g(\tau) \in \mathbf{H}_0(\Gamma_0^0(\ell))\backslash\mathbf{H}_0(\Gamma_0(\ell))$ be a holomorphic modular function and let $k(\tau) \in \mathbf{H}_0(\Gamma_0^0(\ell))\backslash\mathbf{H}_0(\Gamma_0(\ell))$ with minimal polynomial $Q_k(X)$, $\deg2(Q_k)=\ell$, over $\mathbf{A}_0(\Gamma_0(\ell))$. Then $g(\tau)$ admits a representation of the form
\begin{equation}\label{eq:darst_sigma_1}
g(\tau)=\frac{\sum_{i=0}^{\ell-1}a_i k(\tau)^i}{\frac{\partial Q_k}{\partial X}(k(\tau))}
 \end{equation}
with $a_i \in \mathbf{H}_0(\Gamma_0(\ell))$.
\end{lemma}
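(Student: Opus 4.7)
The plan is to derive the representation via Lemma \ref{lem:neukirch-lemma}, applied to the extension $L=\mathbf{A}_0(\Gamma_0^0(\ell))$ of $K=\mathbf{A}_0(\Gamma_0(\ell))$ with the ring $\OO=\mathbf{H}_0(\Gamma_0(\ell))$, in direct analogy to how Corollary \ref{koro:darstellung_besser} is obtained.

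First, I would invoke Lemma \ref{lemma:untergruppe_gal_gruppe} together with the explicit description of $\Gamma_0(\ell)/\Gamma_0^0(\ell)$ from Lemma \ref{lem:Gamma_0_0_def} (represented by the translations $T_\lambda$, $0 \leq \lambda < \ell$) to conclude $[L:K] = \ell$. Since $k(\tau)$ has minimal polynomial $Q_k$ of degree $\ell$ over $K$, it is a primitive element for $L/K$, and Lemma \ref{lem:neukirch-lemma} then provides the $\OO$-module
\[
C_{k(\tau)}=\{x \in L \mid \Tr_{L/K}(x\OO[k(\tau)]) \subseteq \OO\} = \bigoplus_{i=0}^{\ell-1} \OO \cdot \frac{k(\tau)^i}{Q_k'(k(\tau))}.
\]

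The substantive step is to verify $g(\tau) \in C_{k(\tau)}$, that is, $\Tr_{L/K}(g(\tau)k(\tau)^i) \in \mathbf{H}_0(\Gamma_0(\ell))$ for $0 \leq i < \ell$. Because $\Gamma(\ell)$ is normal in $\Gamma$, Lemma \ref{lemma:untergruppe_gal_gruppe} realises $\mathbf{A}_0(\Gamma(\ell))$ as a Galois closure of $L/K$, and the trace takes the explicit form
\[
\Tr_{L/K}(\alpha)=\sum_{\lambda=0}^{\ell-1}\alpha(\tau+\lambda),\qquad \alpha \in L,
\]
since the $T_\lambda$ represent $\Gamma_0(\ell)/\Gamma_0^0(\ell)$. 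Setting $\alpha=g\cdot k^i$, each summand is a product of functions holomorphic on $\mathbb{H}$ evaluated at $\tau+\lambda\in \mathbb{H}$, so the whole sum is holomorphic on $\mathbb{H}$; as it lies in $K$ by construction, it lies in $\mathbf{H}_0(\Gamma_0(\ell))=\OO$.

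Consequently $g$ expands $\OO$-linearly in the basis of $C_{k(\tau)}$, and collecting the common denominator $Q_k'(k(\tau))$ yields the representation \eqref{eq:darst_sigma_1}. The main, essentially bookkeeping, obstacle is to justify the coset-sum formula for the trace in the non-Galois extension $L/K$; passing through the Galois closure $\mathbf{A}_0(\Gamma(\ell))$ resolves this cleanly, after which the holomorphy check on $\mathbb{H}$ and the final expansion are immediate.
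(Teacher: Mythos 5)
Your proof is correct and follows essentially the same route as the paper: both apply Lemma \ref{lem:neukirch-lemma} with $K=\mathbf{A}_0(\Gamma_0(\ell))$, $L=\mathbf{A}_0(\Gamma_0^0(\ell))$, $\OO=\mathbf{H}_0(\Gamma_0(\ell))$ and $\alpha=k(\tau)$, reducing everything to showing $g\in C_{k(\tau)}$. The only difference is that you make explicit (via the coset representatives $T_\lambda$ and the Galois closure $\mathbf{A}_0(\Gamma(\ell))$) why $\Tr_{L/K}$ preserves holomorphy on $\mathbb{H}$, a step the paper's proof merely asserts.
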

\begin{proof}
We use lemma \ref{lem:neukirch-lemma}. We set $K=\mathbf{A}_0(\Gamma_0(\ell))$, $L=\mathbf{A}_0(\Gamma_0^0(\ell))$ and $\alpha=k(\tau)$ with minimal polynomial $f(X)=Q_k(X)$. Furthermore, let
\[ 
\OO=\{h(\tau) \in K: h(\tau)\text{ holomorphic}\}=\mathbf{H}_0(\Gamma_0(\ell)).
 \]
Since $k$ is assumed to be holomorphic this holds for all elements $z \in \OO[k]$ and thus also for $g(\tau)z$ and $\Tr_{L/K}(g(\tau)z)$. This yields $g(\tau) \in C_k$ (cf. lemma \ref{lem:neukirch-lemma}), which implies the claim.
\end{proof}

The preceding lemmas suggest the following procedure for computing an expression for $\sigma_{\ell, n}$: If we choose a holomorphic modular function $f(\tau) \in \mathbf{H}_0(\Gamma_0(\ell))\backslash\mathbf{H}_0(\Gamma)$ the proof of lemma \ref{lem:erzeuger_koerper_g_0_0} shows that $f(S_0\tau) \in \mathbf{H}_0(\Gamma_0^0(\ell))\backslash\mathbf{H}_0(\Gamma_0(\ell))$ holds. If we know the minimal polynomial $Q_{f, S_0}(X)$ of $f(S_0\tau)$ over $\mathbf{A}_0(\Gamma_0(\ell))$, we can thus first determine a representation for $\sigma_{\ell, n}$ by using lemma \ref{lem:darst_sigma_allgemein}. Since the coefficients $a_i$ that occur lie in $\mathbf{H}_0(\Gamma_0(\ell))$, according to corollary \ref{koro:darstellung_besser} one can subsequently determine a representation in terms of $j(\tau)$ and $f(\tau)$ for each of these coefficients. Combining everything one obtains the representation in terms of $j(\tau)$, $f(\tau)$ and $f(S_0\tau)$ that we will we denote by
\begin{equation}\label{eq:sigma_darst_allgemein}
\sigma_{\ell, n, \chi}(q)=R(j(\tau), f(\tau), f(S_0\tau)).
 \end{equation}
We have to specify up to which precision the Laurent series of the various modular functions have to be computed in order to derive the representations. We examine this for the choice $f(\tau)=m_\ell(\tau)$, using the notation $m_{\ell, 2}(\tau):=m_\ell(S_0\tau)$.\\


We first need some statements on the transformation behaviour of the $\eta$-function.
\begin{satz}\cite[p.~113, 126, 130]{Weber}\label{satz:eta_trafo}
Under action from $\gamma=\left(\begin{smallmatrix}
a & b\\ c & d
\end{smallmatrix} \right) \in \SL_2(\Z)$ the $\eta$-function transforms via
\[ 
\eta(\gamma\tau)=\varepsilon \cdot \sqrt{c\tau+d}\cdot\eta(\tau).
 \]
The values for the $24$-th root of unity $\varepsilon$ can be computed according to
\[ 
\varepsilon=\begin{cases}
\left( \frac{c}{d}\right)i^{(d-1)/2}\exp\left(\frac{\pi i}{12}(d(b-c)-(d^2-1)ac) \right),
& d\equiv 1 \mod 2, d>0,\\
\left( \frac{d}{c}\right)i^{(1-c)/2}\exp\left(\frac{\pi i}{12}(c(a+d)-(c^2-1)bd-3) \right),
& c \equiv 1 \mod 2, c>0.
\end{cases}
 \]
\end{satz}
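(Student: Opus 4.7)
The plan is to prove the transformation law by reducing to generators of $\SL_2(\Z)$ and then extending to all of $\SL_2(\Z)$ via cocycle bookkeeping. Recall that $\SL_2(\Z)$ is generated by $T=\left(\begin{smallmatrix} 1 & 1 \\ 0 & 1 \end{smallmatrix}\right)$ and $S=\left(\begin{smallmatrix} 0 & -1 \\ 1 & 0 \end{smallmatrix}\right)$, so the whole statement splits into two tasks: verify the formula on the generators, and then show that both sides, as functions of $\gamma$, satisfy a compatible multiplier relation $\eta(\gamma_1\gamma_2\tau)=\varepsilon(\gamma_1,\gamma_2)\sqrt{\cdot}\,\eta(\tau)$, from which the explicit shape of $\varepsilon$ can be extracted.

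First I would treat $T$: from the $q$-product expansion \eqref{eq:eta_def}, replacing $\tau$ by $\tau+1$ multiplies $q^{1/24}$ by $e^{\pi i/12}$ and leaves the sum untouched, giving $\eta(\tau+1)=e^{\pi i/12}\eta(\tau)$. For $S$, i.e.\ $\eta(-1/\tau)=\sqrt{-i\tau}\,\eta(\tau)$, I would follow the classical route: take logarithms, apply Poisson summation (or alternatively the Mellin transform / contour integration argument of Siegel) to the Eisenstein-type series arising from $\log\eta$, and identify the two sides. An equally viable path is to use the Jacobi triple product to express $\eta$ via a theta series and invoke the theta-inversion formula; either way one obtains the square root with the correct branch $\sqrt{-i\tau}$ (taking the principal branch).

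For general $\gamma=\left(\begin{smallmatrix} a & b\\ c& d\end{smallmatrix}\right)$, the idea is to decompose $\gamma$ as a finite word in $S$ and $T^k$, applying the generator formulas iteratively while keeping track of the accumulated $24$-th roots of unity and the square-root factors. Each time one composes $S$ with a $T^k$, the new multiplier picks up an elementary factor and the square-root combines via the standard identity $\sqrt{c_1\tau'+d_1}\,\sqrt{c\tau+d}=\varepsilon'\sqrt{(c_1c+d_1)\tau + \ldots}$ for an appropriate sign $\varepsilon'$. Carrying this out for $c$ odd corresponds to performing a continued-fraction expansion of $a/c$, and the resulting accumulated phases reorganize (via reciprocity for Dedekind sums) into the compact formula
\[
\left(\tfrac{d}{c}\right)i^{(1-c)/2}\exp\!\left(\tfrac{\pi i}{12}(c(a+d)-(c^2-1)bd-3)\right).
\]
The case $d$ odd is obtained either by the analogous continued-fraction argument for $b/d$ or by applying $-I$ to pass from $\gamma$ to $-\gamma$ and then using the previous case; the two cases overlap consistently because $ad-bc=1$ forces $c,d$ to have opposite parities (up to sign).

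The main obstacle will be the second step: extracting the explicit closed form for $\varepsilon$ from the iterated generator computation. This is the genuine content of the theorem, and it hinges on a nontrivial identity, namely the Dedekind reciprocity law for $s(h,k)=\sum_{r=1}^{k-1}\tfrac{r}{k}\bigl(\tfrac{hr}{k}-\lfloor\tfrac{hr}{k}\rfloor-\tfrac{1}{2}\bigr)$, which is what lets the telescoping continued-fraction sum collapse to the Kronecker symbol and the single exponential in the statement. The verification that the resulting $\varepsilon$ has order dividing $24$ follows automatically from the fact that $\eta^{24}=\Delta$ is a modular form of weight $12$ with trivial multiplier system.
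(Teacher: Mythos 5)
The paper does not prove this theorem at all: it is quoted verbatim from Weber (the citation \cite[p.~113, 126, 130]{Weber} is the ``proof''), so there is no in-paper argument to compare against. Judged on its own, your outline follows the standard classical route (Dedekind, Siegel, Apostol): verify the law on the generators $T$ and $S$, then propagate the multiplier through a word in the generators and collapse the accumulated phases via Dedekind-sum reciprocity. The generator step is fine: $\eta(\tau+1)=e^{\pi i/12}\eta(\tau)$ is immediate from \eqref{eq:eta_def}, and $\eta(-1/\tau)=\sqrt{-i\tau}\,\eta(\tau)$ by theta inversion or Poisson summation, and both are consistent with the stated closed form (plug $\gamma=T$ into the first case and $\gamma=S$ into the second to get $e^{\pi i/12}$ and $e^{-\pi i/4}$ respectively).

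However, as submitted this is a plan rather than a proof. You yourself flag that ``the main obstacle'' is extracting the closed form for $\varepsilon$ from the iterated generator computation -- but that extraction \emph{is} the theorem; everything else is routine. A complete argument must either (i) run the induction on the continued-fraction expansion of $a/c$ and show that the telescoping sum of phases equals $\exp(\pi i\,s(d,c)\cdot(\text{const}))$ times the stated Kronecker-symbol factor, using the reciprocity law $s(h,k)+s(k,h)=-\tfrac14+\tfrac1{12}(\tfrac hk+\tfrac kh+\tfrac1{hk})$, or (ii) prove directly that the right-hand side defines a consistent multiplier system (i.e.\ satisfies the cocycle condition) and agrees with $\eta(\gamma\tau)/(\sqrt{c\tau+d}\,\eta(\tau))$ on generators. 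Neither is carried out. There is also a concrete error: $ad-bc=1$ does \emph{not} force $c$ and $d$ to have opposite parities -- it only forces them to be coprime, and both may be odd (e.g.\ $\left(\begin{smallmatrix}1&0\\1&1\end{smallmatrix}\right)$). Hence the two branches of the formula genuinely overlap, and their consistency on the overlap is a nontrivial congruence identity that must be checked, not dismissed as vacuous. Finally, note that the two cases as stated do not literally cover all of $\SL_2(\Z)$ (e.g.\ $d$ odd with $d<0$); one must invoke $(-\gamma)\tau=\gamma\tau$ to normalise signs, which you mention but should make explicit.
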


\begin{koro}\label{koro:m_ell_2_trafo}
We have $m_{\ell, 2}(\tau)=\ell^s m_\ell\left(\frac{\tau}{\ell}\right)^{-1}$ as well as 
$w_\ell(m_{\ell, 2}(\tau))=m_\ell(\ell\tau)$.
\end{koro}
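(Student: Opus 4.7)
The plan is to unravel both identities directly from the definition $m_\ell(\tau)=\ell^s(\eta(\ell\tau)/\eta(\tau))^{2s}$ and the transformation behaviour of $\eta$ under $S_0=\left(\begin{smallmatrix}0&-1\\1&0\end{smallmatrix}\right)$ recorded in Theorem \ref{satz:eta_trafo}. Since only the $(2s)$-th power of $\eta$ enters $m_\ell$, the $24$-th roots of unity $\varepsilon$ appearing in the transformation law are raised to an even power and combine nicely with the factor $\sqrt{c\tau+d}$; concretely, I would first record the clean consequence $\eta(-1/\tau)^{2s}=(-i\tau)^s\eta(\tau)^{2s}$. This eliminates any worry about choice of branches of the square root.

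For the first identity, I would write $m_{\ell,2}(\tau)=m_\ell(-1/\tau)=\ell^s\bigl(\eta(-\ell/\tau)/\eta(-1/\tau)\bigr)^{2s}$ and treat the numerator by substituting $\tau'=\tau/\ell$, so that $\eta(-\ell/\tau)=\eta(-1/\tau')$ is again accessible via the transformation formula. Plugging in yields
\[
\Bigl(\frac{\eta(-\ell/\tau)}{\eta(-1/\tau)}\Bigr)^{2s}=\Bigl(\frac{-i\tau/\ell}{-i\tau}\Bigr)^{s}\Bigl(\frac{\eta(\tau/\ell)}{\eta(\tau)}\Bigr)^{2s}=\ell^{-s}\Bigl(\frac{\eta(\tau/\ell)}{\eta(\tau)}\Bigr)^{2s},
\]
so the prefactor $\ell^s$ in the definition of $m_\ell$ cancels the $\ell^{-s}$. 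Comparing with the direct expansion of $m_\ell(\tau/\ell)=\ell^s(\eta(\tau)/\eta(\tau/\ell))^{2s}$, one reads off $m_{\ell,2}(\tau)=\ell^s m_\ell(\tau/\ell)^{-1}$.

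For the second identity there is essentially nothing to compute. By definition of the Fricke--Atkin--Lehner involution and of $m_{\ell,2}$,
\[
w_\ell(m_{\ell,2}(\tau))=m_{\ell,2}\bigl(-1/(\ell\tau)\bigr)=m_\ell\bigl(-1/S_0(-1/(\ell\tau))\bigr)=m_\ell(\ell\tau),
\]
using that $S_0$ is an involution on $\mathbb{H}$.

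The only real obstacle is bookkeeping of the auxiliary factor produced by Theorem \ref{satz:eta_trafo}; once it is observed that $2s$ is even, so that $\varepsilon^{2s}$ is trivial on the relevant coset and $\sqrt{c\tau+d}^{\,2s}=(c\tau+d)^s$, the rest is a one-line substitution. No deeper input is required.
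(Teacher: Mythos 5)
Your argument is correct and essentially identical to the paper's: both identities are obtained by unwinding the definition of $m_\ell$ as an $\eta$-quotient, using $\ell S_0\tau=S_0(\tau/\ell)$ together with the transformation of $\eta$ under $S_0$ so that the resulting factor $\ell^{-s}$ cancels the prefactor $\ell^s$, and composing $S_0$ with the Fricke map for the second claim. One cosmetic remark: you do not actually need $\varepsilon^{2s}$ to be trivial (it need not be in general) --- in the quotient $\eta(S_0(\tau/\ell))/\eta(S_0\tau)$ the root of unity attached to $S_0$ is the same in numerator and denominator and cancels outright, which is exactly how the paper's computation proceeds.
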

\begin{proof}
We compute
\begin{align}
m_{\ell, 2}(\tau)=& m_\ell(S_0\tau)=\ell^s\left(\frac{\eta(\ell S_0\tau)}{\eta(S_0\tau)}\right)^{2s}
=\ell^s\left(\frac{\eta\left(S_0\frac{\tau}{\ell}\right)}{\eta(S_0\tau)}\right)^{2s}
=\ell^s\left(\frac{-i\sqrt{i\frac{\tau}{\ell}}\cdot\eta\left(\frac{\tau}{\ell}\right)}{-i\sqrt{i\tau}\cdot\eta(\tau)}\right)^{2s}\nonumber\\ 
=&\left(\frac{\eta\left(\frac{\tau}{\ell}\right)}{\eta(\tau)} \right)^{2s}
=\ell^s m_\ell\left(\frac{\tau}{\ell}\right)^{-1}\label{eq:ml_S0}
 \end{align}
and
\begin{align}
w_\ell(m_{\ell, 2}(\tau))=
\ell^s\left(\frac{\eta\left(\ell S_0\left(
\begin{smallmatrix} 0 & -1 \\ \ell & 0 \end{smallmatrix}  
\right)\tau \right)}{\eta\left(S_0\left(
\begin{smallmatrix} 0 & -1 \\ \ell & 0 \end{smallmatrix}  
\right)\tau\right)} \right)^{2s}
=\ell^s\left(\frac{\eta(\ell^2\tau)}{\eta(\ell\tau)}\right)^{2s}
=m_\ell(\ell\tau).\label{eq:ml_S0_wl}
 \end{align}
\end{proof}
From the definition of the minimal polynomial $M_\ell$ via \cite[Lemma 2.11]{ELGS} we immediately see that the function $m_{\ell, 2}(\tau)=m_\ell(S_0\tau)$, which is a conjugate of $m_\ell(\tau)$, has the same minimal polynomial $M_\ell(X, j(\tau))$ over $\C(j(\tau))$. Hence, the minimal polynomial of $m_{\ell, 2}(\tau)$ over $\mathbf{A}_0(\Gamma_0(\ell))$ is precisely $M_{\ell, 2}(X)=\frac{M_\ell(X, j(\tau))}{X-m_\ell(\tau)}$. We can now prove the following statement:

\begin{lemma}\label{lem:sigma_prec}
Let $\sigma_{\ell, n}(q)$, $m_\ell(q)$, $m_{\ell, 2}(q)$ and $j(q)$ be computed up to precision
\[ 
\prec(\ell, n)=(\ell^2+\ell+1)v-1,
 \]
where $v=\frac{\ell-1}{\gcd(\ell-1, 12)}=\ord(m_\ell)$ holds. Then one can obtain a rational expression for $\sigma_{\ell, n}(q)$ in terms of the other three modular functions in a unique way.
\end{lemma}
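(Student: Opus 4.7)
The plan is to execute the procedure outlined in the paragraph just before the lemma, combining Lemmas \ref{lem:erzeuger_koerper_g_0_0} and \ref{lem:darst_sigma_allgemein} with Corollary \ref{koro:darstellung_besser} to reduce the determination of $\sigma_{\ell,n}(q)$ to a finite linear system, and then to count free coefficients and $q$-orders of the denominators to verify that the stated precision is exactly what is needed.

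First, I would apply Lemma \ref{lem:darst_sigma_allgemein} with $k(\tau) = m_{\ell,2}(\tau)$, whose minimal polynomial over $\mathbf{A}_0(\Gamma_0(\ell))$ is
\[
M_{\ell,2}(X) \;=\; \frac{M_\ell(X, j(\tau))}{X - m_\ell(\tau)}
\]
of degree $\ell$, as established in the discussion directly preceding the lemma. This writes
\[
\sigma_{\ell, n}(\tau) \;=\; \frac{\sum_{i=0}^{\ell-1} a_i(\tau)\, m_{\ell,2}(\tau)^i}{\frac{\partial M_{\ell,2}}{\partial X}(m_{\ell,2}(\tau))}
\]
with $a_i(\tau) \in \mathbf{H}_0(\Gamma_0(\ell))$. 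Applying Corollary \ref{koro:darstellung_besser} to each coefficient $a_i$ and then clearing the $m_\ell$-denominators by a common power $m_\ell^K$ yields an overall representation of the form $\sigma_{\ell,n}(\tau) = N(\tau)/D(\tau)$, where
\[
N(\tau) \;=\; \sum_{i=0}^{\ell-1} \tilde{Q}_i\!\bigl(m_\ell(\tau), j(\tau)\bigr)\, m_{\ell,2}(\tau)^i
\]
is a polynomial expression in $j$, $m_\ell$, $m_{\ell,2}$ with $\deg2_Y \tilde{Q}_i < \deg2_Y M_\ell$ (and a bound on $\deg2_X \tilde{Q}_i$ coming from the admissible pole orders), and $D(\tau)$ is the explicit product $m_\ell(\tau)^K \cdot \tfrac{\partial M_\ell}{\partial Y}(m_\ell(\tau), j(\tau)) \cdot \tfrac{\partial M_{\ell,2}}{\partial X}(m_{\ell,2}(\tau))$.

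Next I would count the free coefficients in the $\tilde{Q}_i$. Using $[\mathbf{A}_0(\Gamma_0^0(\ell)) : \C(j)] = \ell(\ell+1)$ from Lemma \ref{lemma:untergruppe_gal_gruppe}, together with $\ord(m_\ell) = v$ and the order of $m_{\ell,2}$ read off from \eqref{eq:ml_S0}, one bounds the total coefficient count by $(\ell^2 + \ell + 1)v$. Matching $q$-expansions of both sides of $\sigma_{\ell, n}\,D = N$ coefficient by coefficient then gives a linear system over $\Q[\zeta_n]$ in those unknowns. At precision $\prec(\ell, n) = (\ell^2 + \ell + 1)v - 1$ the system becomes square: existence of a solution is provided by the two preceding lemmas, and uniqueness follows because any second solution would give rise to a non-zero element of $\mathbf{A}_0(\Gamma_0^0(\ell))$ whose $q$-order exceeds what is possible under the degree bounds of the ansatz.

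The hardest part is the precision bookkeeping. One must track the exact leading exponents of $m_\ell$, $m_{\ell,2}$, $\tfrac{\partial M_\ell}{\partial Y}(m_\ell, j)$ and $\tfrac{\partial M_{\ell,2}}{\partial X}(m_{\ell,2})$, and combine them through the denominator $D$ and each monomial of $N$. By Corollary \ref{koro:m_ell_2_trafo} and \eqref{eq:ml_S0} the expansion of $m_{\ell,2}$ involves $q^{1/\ell}$, so the analysis must be carried out in a common ring of Puiseux series, and one must verify that the resulting parameter count equals exactly $(\ell^2 + \ell + 1)v$, which explains the factor $v = (\ell-1)/\gcd(\ell-1,12)$ appearing in $\prec(\ell, n)$.
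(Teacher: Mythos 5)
Your skeleton coincides with the paper's: apply Lemma \ref{lem:darst_sigma_allgemein} with $k=m_{\ell,2}$ and minimal polynomial $M_{\ell,2}(X)=M_\ell(X,j)/(X-m_\ell)$, then expand each coefficient $a_i$ via Corollary \ref{koro:darstellung_besser}. But the entire content of the lemma is the specific number $(\ell^2+\ell+1)v-1$, and your proposal asserts rather than derives it. Two concrete gaps. First, you never bound the range of admissible powers of $m_\ell$ in the numerators $\tilde Q_i$. Since $m_\ell$ is holomorphic and vanishes at $i\infty$, arbitrarily high powers of $m_\ell$ cause no singularity at that cusp, so the upper end of the exponent range cannot be read off from the $q$-expansion at $i\infty$ alone; it must come from controlling the behaviour at the other cusp. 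The paper does this by applying the Fricke--Atkin--Lehner involution $w_\ell$ and computing $\ord(\sigma_{\ell,n}^*)$, $\ord(N^*)$, $\ord(M^*)$ and $\ord(m_{\ell,2}^*)=v\ell$ (via Corollary \ref{koro:m_ell_2_trafo}), together with the orders of the unstarred quantities, to sandwich the monomial orders $i v-k=\ord(m_\ell^i j^k)$ into the interval $[1-v,\,(\ell^2+\ell)v-1]$, which contains exactly $(\ell^2+\ell+1)v-1$ integers. None of this two-cusp valuation analysis appears in your argument, and without it there is no finite parameter count at all.

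Second, your counting heuristic does not produce the right number and conflates two different quantities: $[\mathbf{A}_0(\Gamma_0^0(\ell)):\C(j)]\cdot v=\ell(\ell+1)v$ is not $(\ell^2+\ell+1)v$, and in the paper the precision is not a global count of unknowns but is determined for each coefficient $a_i$ separately. Uniqueness there is immediate because the monomials $m_\ell^{i}j^{k}$ with $0\le k<v$ have pairwise distinct $q$-orders $iv-k$, so matching coefficients of $a_iM$ against the $q$-expansion is a triangular system; your appeal to a ``square system'' plus an order-of-vanishing argument presupposes exactly the degree bounds you have not established. The plan is sound, but the decisive computation is missing.
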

\begin{proof}
For determining the necessary precision we proceed in two steps. We first determine bounds on the order of the coefficients $a_i$ from equation \eqref{eq:darst_sigma_1}, which will subsequently give rise to bounds on the required precision.\\

According to \eqref{eq:darst_sigma_1} one has
\begin{equation}\label{eq:darst_sigma_2}
\sigma_{\ell, n}(q)\underbrace{\frac{\partial M_{\ell, 2}}{\partial X}(m_{\ell, 2}(q))}_{=:N}=\sum_{i=0}^{\ell-1}a_im_{\ell, 2}(q)^i.
 \end{equation}
Using \eqref{eq:ml_S0} one obtains $\ord(m_\ell)=v$, $\ord(m_{\ell, 2})=-\frac{v}{\ell}$ and thus
\begin{align*}
\ord(N)=&\ord\left(\frac{\frac{\partial M_\ell}{\partial X}(m_{\ell, 2}, j)(m_{\ell, 2}-m_\ell)-M_\ell(m_{\ell, 2}, j)}{(m_{\ell, 2}-m_\ell)^2}\right)\\
=&\min\left(\ord\left(\frac{\partial M_\ell}{\partial X}(m_{\ell, 2}, j)\right)-\frac{v}{\ell}, \ord(M_\ell(m_{\ell, 2}, j))\right)+\frac{2v}{\ell}.
 \end{align*}
Considering the orders of the different terms of $M_\ell$ similarly to \cite[Lemma 2.12]{ELGS_Rechnungen} one obtains
\[ 
iv\leq (v-k)\ell+v \quad \Rightarrow \quad -\frac{iv}{\ell}-k \geq -v-\frac{v}{\ell}\quad \Rightarrow \quad \ord(M_\ell(m_{\ell, 2}, j))\geq -v-\frac{v}{\ell}
 \]
whenever the coefficient $a_{i, k}$ in the polynomial $M_\ell$ does not vanish. In the same way one gleans $\ord\left(\frac{\partial M_\ell}{\partial X}(m_{\ell, 2}, j)\right)\geq -v$,
which yields $\ord(N)\geq-v+\frac{v}{\ell}$.\\
Applying $w_\ell$ to \eqref{eq:darst_sigma_2} and using \eqref{eq:ml_S0_wl} one obtains $\ord(m_{\ell, 2}^*)=v\ell$, which similarly implies
$\ord(M_\ell(m_{\ell, 2}^*, j^*))\geq 0$, 
$\ord\left(\frac{\partial M_\ell}{\partial X}(m_{\ell, 2}^*, j^*)\right)\geq -v\ell$
and hence $\ord(N^*)\geq v(1-\ell)$.\\
Furthermore, using the definition of $\sigma_{\ell, n}$, formulae \eqref{eq:x_def}, \eqref{eq:y_def}, \eqref{eq:eta_def} and lemma \ref{lem:wp_trafo_allgemein} one can calculate
\begin{align*}
\ord(\sigma_{\ell, n})=&\ord(G_{\ell, n})+\ord(H_{\ell, n})+r\ord(p_1)-\ord(\Delta)
\geq 0 +\frac{1}{\ell}+0-1=-1+\frac{1}{\ell},\\
\ord(\sigma_{\ell, n}^{\ast})=&\ord(G_{\ell, n}^{*})+\ord(H_{\ell, n}^*)+r\ord(p_1^*)-\ord(\Delta^*)
\geq 1 +0+0-\ell=1-\ell.
 \end{align*}
Altogether, this yields
\[ 
\ord(\sigma_{\ell, n}N)\geq -v-1+\frac{v+1}{\ell},\quad 
\ord(\sigma_{\ell, n}^*N^*)\geq 1-\ell+v(1-\ell).
 \]
From equation \eqref{eq:darst_sigma_2} one obtains
\begin{align}
\ord(a_i)\geq&\ord(\sigma_{\ell, n}Nm_{\ell, 2}^{-i})\geq -v-1+\frac{v+1}{\ell}+\frac{iv}{\ell}\quad \Rightarrow\quad \ord(a_i)\geq -v,\nonumber\\
\ord(a_i^*)\geq& \ord(\sigma_{\ell, n}^*N^*(m_{\ell, 2}^*)^{-i})\geq 1-\ell+v(1-\ell)-iv\ell \geq v+1-v\ell-\ell-(\ell-1)v\ell\nonumber\\
=&-\ell^2v-\ell+v+1.\label{eq:ord_ai_stern}
\end{align}
We next consider the equation
\begin{equation}\label{eq:darst_sigma_3}
a_i(q)\underbrace{\frac{\partial M_\ell}{\partial Y}(m_\ell(q), j(q))}_{=:M} =\sum_{i=i_1}^{i_2}\sum_{k=0}^{v-1}b_{i, k}m_\ell(q)^ij(q)^k
 \end{equation}
according to corollary \ref{koro:darstellung_besser}. Again proceeding similarly to \cite[Lemma 2.12]{ELGS_Rechnungen} one easily shows $\ord(M)\geq 1$, $\ord(M^*)\geq -(v-1)\ell-v$, which yields
\[ 
\ord(a_iM)\geq 1-v,\quad \ord(a_i^*M^*)\geq -\ell^2v-\ell+v+1-(v-1)\ell-v=-(\ell^2+\ell)v+1=:o.
 \]
From this we deduce the bounds
\[ 
iv-k\geq 1-v\quad \text{and}\quad -iv-\ell k \geq o \quad \Rightarrow\quad iv-k\leq iv+\ell k \leq -o,
 \]
for the orders $iv-k=\ord(m_\ell^ij^k)$ of the summands on the right hand side in \eqref{eq:darst_sigma_3}. Taking their difference finally yields the necessary precision $\prec(\ell, n)=(\ell^2+\ell+1)v-1$ we claimed.
\end{proof}
\begin{bem}
The estimate \eqref{eq:ord_ai_stern} can obviously be improved significantly for small values of $i$. E.~g., for $i=0$ one obtains the bound $\ord(a_0^*)\geq v+1-(v+1)\ell$, whence the precision $\prec(\ell, n)=(2\ell+1)v-1$ is sufficient to calculate the representation for $a_0$.
\end{bem}

Using the approach sketched in lemma \ref{lem:sigma_prec} we thus obtain the equation 
\begin{equation}\label{eq:sigma_darst_konkret}
\sigma_{\ell, n, \chi}(\tau)=R(j(\tau), m_\ell(\tau), m_\ell(S_0\tau))
 \end{equation}
as a special case of \eqref{eq:sigma_darst_allgemein}.
The following lemma provides equivalent statements of \eqref{eq:sigma_darst_konkret}.

\begin{lemma}
For $0\leq k<\ell$ let $T_k=\left(\begin{smallmatrix}
1 & k \\ 0 & 1
\end{smallmatrix}\right) \in \Gamma_0(\ell)$ be the matrices introduced in equation \eqref{eq:def_matrix_T} and let $S_k=\left(\begin{smallmatrix}
0 & -1 \\ 1 & k
\end{smallmatrix}\right)$ be as in equation \eqref{eq:def_rep_Sk}. Furthermore, let $P=(x(\zeta_\ell, q), y(\zeta_\ell, q))$ and $Q_k=(x(\zeta_\ell^kq^{\frac{1}{\ell}}, q), y(\zeta_\ell^kq^{\frac{1}{\ell}}, q)), 0 \leq k < \ell$, denote these $\ell$-torsion points on the Tate curve. Then the action of $T_k$ transforms equation \eqref{eq:sigma_darst_konkret} into
\begin{equation}\label{eq:sigma_darst_konkret_trafo}
\frac{G_{\ell, n, \chi}(q)\left(\sum_{\lambda \in \F_\ell^*}\chi(\lambda)(\lambda Q_k)_V\right) p_1(q)^rG_{\chi^{-1}}(P, Q_k)}{\Delta(q)}=R(j(\tau), m_\ell(\tau), m_\ell(S_k\tau)),
 \end{equation}
where $V=x$ holds for $n$ odd and $V=y$ for $n$ even.

\end{lemma}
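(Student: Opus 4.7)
The proof will be a direct verification: apply the operator $T_k$ (acting as $\tau \mapsto \tau + k$) to both sides of \eqref{eq:sigma_darst_konkret} and track what happens to every constituent. Since $T_k$ has bottom row $(0,1)$, the weight factors $(c\tau+d)^w$ collapse to $1$ for any weight $w$, which is what makes the computation clean.

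For the right-hand side, $j(\tau)$ is $\Gamma$-invariant and $m_\ell(\tau)$ belongs to $\mathbf{A}_0(\Gamma_0(\ell))$ with $T_k \in \Gamma_0(\ell)$, so both remain unchanged. For the third argument I compute the matrix product
\[
S_0 T_k = \begin{pmatrix} 0 & -1 \\ 1 & 0 \end{pmatrix}\begin{pmatrix} 1 & k \\ 0 & 1 \end{pmatrix} = \begin{pmatrix} 0 & -1 \\ 1 & k \end{pmatrix} = S_k,
\]
so that $m_\ell(S_0 \tau) \mapsto m_\ell(S_0 T_k \tau) = m_\ell(S_k \tau)$. This yields the right-hand side of \eqref{eq:sigma_darst_konkret_trafo} directly.

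For the left-hand side I go factor by factor in the definition of $\sigma_{\ell,n,\chi}$. Under $\tau \mapsto \tau+k$ one has $q \mapsto q$ while $q^{1/\ell} \mapsto \zeta_\ell^k\, q^{1/\ell}$; the $\ell$-th roots of unity $\zeta_\ell^\lambda$ are of course unaffected. Hence $\Delta(q)$, $p_1(q)$ and $G_{\ell,n,\chi}(q)$ (which only involves $\zeta_\ell^\lambda$ and $q$) are invariant, while
\[
H_{\ell,n,\chi}(q) \;\longmapsto\; \sum_{\lambda \in \F_\ell^*}\chi(\lambda)\,V\!\bigl(\zeta_\ell^{k\lambda}q^{\lambda/\ell},q\bigr) \;=\; \sum_{\lambda \in \F_\ell^*}\chi(\lambda)\,(\lambda Q_k)_V,
\]
using that $\lambda \cdot (x(w,q),y(w,q)) = (x(w^\lambda,q),y(w^\lambda,q))$ on the Tate curve together with $Q_k = (x(\zeta_\ell^k q^{1/\ell},q), y(\zeta_\ell^k q^{1/\ell},q))$. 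Finally, the Weil pairing argument $\xi = e_\ell(P,Q_0)$ transforms into $e_\ell(P,Q_k)$ since $P$ is unchanged while $Q_0 \mapsto Q_k$, giving the factor $G_{\chi^{-1}}(P,Q_k)$ on the left-hand side.

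I do not anticipate any genuine obstacle: the computation is entirely mechanical once the correct $q$-expansion identities are in place. The only point requiring mild care is verifying that $m_\ell(\tau+k) = m_\ell(\tau)$, which follows from the transformation $\eta(\tau+1) = e^{\pi i/12}\eta(\tau)$ combined with the exponent $2s$ in \eqref{eq:ml_def}: the phase contribution in numerator and denominator of $(\eta(\ell\tau)/\eta(\tau))^{2s}$ collapses because $2s(\ell-1)/12 \in \Z$ by definition of $s$. With this verified, combining the pieces gives exactly \eqref{eq:sigma_darst_konkret_trafo}.
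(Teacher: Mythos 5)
Your proposal is correct and follows essentially the same route as the paper: invariance of $j$, $m_\ell$, $\Delta$, $p_1$ and $G_{\ell,n,\chi}$ under $T_k$, the matrix identity $S_0T_k=S_k$ for the third argument of $R$, and the shift $q^{1/\ell}\mapsto\zeta_\ell^k q^{1/\ell}$ turning $H_{\ell,n,\chi}$ into the sum over $\lambda Q_k$ and $e_\ell(P,Q_0)$ into $e_\ell(P,Q_k)$. The only cosmetic difference is that the paper routes the computation of $x(q^{\lambda/\ell},q)|_{T_k}$ through the $\wp$-transformation lemma rather than reading it off the $q$-expansion directly, which is equivalent.
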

\begin{proof}
We first consider the right hand side of the equation. Since $T_k$ lies in $\Gamma_0(\ell)$, both $m_\ell(\tau)$ and $j(\tau)$ are invariant under the action of $T_k$. Furthermore,
\[ 
\begin{pmatrix}
0 & -1 \\ 1 & 0
\end{pmatrix}
\begin{pmatrix}
1 & k \\ 0 & 1
\end{pmatrix}
=
\begin{pmatrix}
0 & -1 \\ 1 & k
\end{pmatrix}
 \]
and hence $\left.m_\ell(S_0\tau)\right|_{T_k}=m_\ell(S_0T_k\tau)=m_\ell(S_k\tau)$ holds.\\
On the left hand side we examine the different components of $\sigma_{\ell, n, \chi}(\tau)$ according to corollary \ref{koro:gs_atkin_def}. We first see $\left.\Delta(q)\right|_{T_k}=\Delta(q)$, $\left.p_1(q)\right|_{T_k}=p_1(q)$, and by the proof of corollary \ref{koro:gs_atkin_def} $\left.G_{\ell, n, \chi}(q)\right|_{T_k}=\chi^{-1}(1)G_{\ell, n, \chi}(q)=G_{\ell, n, \chi}(q)$ holds. Furthermore, using $c=\frac{1}{(2\pi i)^2}$ from lemma \ref{lem:wp_trafo_allgemein} one calculates
\[ 
\left.x\left(q^{\frac{\lambda}{\ell}}, q\right)\right|_{T_k}
=\left.c\wp\left(\frac{\lambda\tau}{\ell}, \tau\right)\right|_{T_k}
=c\wp\left(\frac{\lambda(\tau+k)}{\ell}, \tau\right)
=x\left(\zeta_\ell^{k\lambda}q^{\frac{\lambda}{\ell}}, q\right)
\]
and an analogue statement holds for $y(q^{\frac{\lambda}{\ell}}, q)$. This implies
\[ 
\left.H_{\ell, n, \chi}(q)\right|_{T_k}=\sum_{\lambda \in \F_\ell^*}\chi(\lambda)V\left(\zeta_\ell^{k\lambda}q^{\frac{\lambda}{\ell}}, q\right)
=\sum_{\lambda \in \F_\ell^*}\chi(\lambda)(\lambda Q_k)_V,
 \] 
where $V=x$ holds for odd $n$ and $V=y$ for even $n$. In addition, we deduce
\[ 
\left.G_{\chi^{-1}}(e_\ell(P, Q_0))\right|_{T_k}=G_{\chi^{-1}}(e_\ell(P, Q_k)).
 \]
\end{proof}

\subsection{Application}
Let $\ell$ be an Atkin prime for an elliptic curve $E$ over $\F_p$. The theorems from \cite[pp.~236--239]{Schoof} concerning the decomposition of the modular polynomial $\Phi_\ell$ and the constructions from \cite{Mueller} imply that for some $r \mid \ell+1$, $r>1$, there exists an $\ell$-isogeny $\psi: E \rightarrow E^\prime$ defined over $\F_{p^r}$ and which corresponds to a root $m_\ell(E) \in \F_{p^r}\backslash\F_{p^{r-1}}$ of $M_\ell(X, j(E))$. Henceforth, we assume that $r>2$ holds.
For $n \mid \ell-1$, a character $\chi$ of order $n$ and a point $Q$ on $E$ we use the notation
\[ 
G_{\ell, n, \chi}(E, Q):=\sum_{a=1}^{\ell-1}\chi(a)(aQ)_V,
 \]
where $V=x$ holds for odd and $V=y$ for even $n$.\\

We can specialize the formulae \eqref{eq:sigma_darst_konkret} and \eqref{eq:sigma_darst_konkret_trafo} to a concrete elliptic curve $E/\F_p$ in question using the same considerations as in \cite[p.~14]{ELGS}. By means of the Deuring lifting theorem from \cite[p.~184]{Lang} we can lift the curve $E/\F_p$ in question to a curve $E_0$ over a number field $K$. This means there is a prime ideal $\mathfrak{P} \subset \OO_K$ with residue field $\F_p$ such that the reduction of $E_0$ modulo $\mathfrak{P}$ is a non-singular elliptic curve which is isomorphic to $E$. In particular, one sees that for the value of $\tau$ corresponding to the curve $E_0$ the quantity $m_\ell(E)$ corresponds to the value $m_\ell(\tau)$ in \eqref{eq:sigma_darst_konkret_trafo}. Since the values $\varphi_p(m_\ell(E))$ and $\varphi_p^2(m_\ell(E))$ are also roots of $M_\ell(X, j(E))$, each of them likewise corresponds to a conjugate $m_\ell(S_k\tau)$ for a suitable $k$ with $0\leq k<\ell$. Here we use the fact that we have assumed $r>2$ and that hence $\varphi_p^2(m_\ell(E))\neq m_\ell(E)$ holds.\\
Furthermore, by applying $\varphi_p$ to the formulae for computing the Elkies factor $F_{\ell, \lambda}$ from \cite{Schoof} and \cite[pp.~89--106]{Mueller} it follows that the values $\varphi_p(m_\ell(E))$, $\varphi_p^2(m_\ell(E))$ correspond to the isogenies $\phi_p(\psi): E \rightarrow \phi_p(E^\prime)$ as well as $\phi_p^2(\psi): E \rightarrow \phi_p^2(E^\prime)$. Denoting by $P\neq \OO$ a point in $\ker(\psi)$, we deduce $\phi_p(P) \in \ker(\phi_p(\psi))$ and $\phi_p^2(P) \in \ker(\phi_p^2(\psi))$.
Specialising formula \eqref{eq:sigma_darst_konkret_trafo} to the curve $E$ in question thus yields the equations
\begin{align*}
R(j(E), m_\ell(E), \varphi_p(m_\ell(E)))&=\frac{G_{\ell, n, \chi}(E, P)G_{\ell, n, \chi}(E, \phi_p(P))p_1(E)^rG_{\chi^{-1}}(e_\ell(P, \phi_p(P)))}{\Delta(E)},\\
R(j(E), m_\ell(E), \varphi_p^2(m_\ell(E)))&=\frac{G_{\ell, n, \chi}(E, P)G_{\ell, n, \chi}(E, \phi_p^2(P))p_1(E)^rG_{\chi^{-1}}(e_\ell(P, \phi_p^2(P)))}{\Delta(E)}.
 \end{align*}
From this we directly glean
\begin{align}
\frac{R(j(E), m_\ell(E), \varphi_p(m_\ell(E)))}{R(j(E), m_\ell(E), \varphi_p^2(m_\ell(E)))}
=&\frac{G_{\ell, n, \chi}(E, \phi_p(P)) G_{\chi^{-1}}(e_\ell( P, \phi_p(P))) }{G_{\ell, n, \chi}(E, \phi_p^2(P)) G_{\chi^{-1}}(e_\ell( P, \phi_p^2(P)))}.\label{eq:atkin_gl_1}
\end{align}
Furthermore, for $p \equiv 1 \mod n$ we calculate
\begin{equation}\label{eq:gs_phi_p}
G_{\ell, n, \chi}(E, \phi_p(P))=\sum_{a=1}^{\ell-1}\chi(a)(a\phi_p(P))_V=\sum_{a=1}^{\ell-1}\chi^p(a)(aP)_V)^p
=G_{\ell, n, \chi}(E, P)^p,
 \end{equation}
which immediately implies $G_{\ell, n, \chi}(E, \phi_p^2(P))=G_{\ell, n, \chi}(E, P)^{p^2}$. Hence, we obtain
\begin{equation}
\frac{G_{\ell, n, \chi}(E, \phi_p(P))}{G_{\ell, n, \chi}(E, \phi_p^2(P))}
=G_{\ell, n, \chi}(E, P)^{p-p^2}=(G_{\ell, n, \chi}(E, P)^n)^{\frac{p(1-p)}{n}}\label{eq:atkin_gl_2},
\end{equation}
since $n \mid 1-p$ holds. Using equation \eqref{eq:atkin_gl_1} one deduces
\begin{equation}\label{eq:atkin_gl_3}
\frac{R(j(E), m_\ell(E), \varphi_p(m_\ell(E)))}{R(j(E), m_\ell(E), \varphi_p^2(m_\ell(E)))}(G_{\ell, n, \chi}(E, P)^n)^{\frac{p(p-1)}{n}}
=\frac{G_{\chi^{-1}}(e_\ell( P, \phi_p(P)))}{G_{\chi^{-1}}(e_\ell( P, \phi_p^2(P)))}.
 \end{equation}
As detailed in \cite{ELGS, ELGS_Rechnungen}, the value of $G_{\ell, n, \chi}(E, P)^n$ can be computed from the values $j(E)$ and $m_\ell(E)$ by means of the precomputed universal elliptic Gau{\ss} sums for the Elkies case. Hence, the left hand side of equation \eqref{eq:atkin_gl_3} can be computed using the precomputed rational expressions.\\

According to equation \eqref{eq:char_gl}
\[ 
\phi_p^2(P)=t\phi_p(P)-pP
 \]
holds. Using the properties of the Weil pairing $e_\ell$ this implies
\[ 
e_\ell(P, \phi_p^2(P))=e_\ell(P, t\phi_p(P)-pP)=e_\ell(P, \phi_p(P))^t.
 \]
One easily sees $G_{\chi^{-1}}(\xi^t)=\chi(t)G_{\chi^{-1}}(\xi)$, which yields
\[ 
\frac{G_{\chi^{-1}}(e_\ell( P, \phi_p(P)))}{G_{\chi^{-1}}(e_\ell( P, \phi_p^2(P)))}
=\frac{G_{\chi^{-1}}(e_\ell( P, \phi_p(P)))}{\chi(t)G_{\chi^{-1}}(e_\ell( P, \phi_p(P)))}
=\chi^{-1}(t)
 \]
for the right hand side of equation \eqref{eq:atkin_gl_3}. Hence, equation \eqref{eq:atkin_gl_3} allows to compute the index of $t \in (\Z/\ell\Z)^*$ modulo $n$. If the computation is performed for appropriate different coprime divisors $n$ of $\ell-1$ one obtains the value $t$ modulo $\ell$ as required in Schoof's algorithm.\\

Finally, we consider the assumptions we have made when deriving the formulae. Firstly, we assumed $r>2$. If $r=2$ holds, no further calculations are necessary for determining $t$ modulo $\ell$, though: Namely, according to \cite[p.~236]{Schoof} for $P \in E[\ell]$ we have $\phi_p^2(P)=aP$ for some $a \in \F_\ell^*$. Introducing this in equation \eqref{eq:char_gl} yields
\[ 
(a+p)P=t\phi_p(P).
 \]
Since $\ell$ is an Atkin prime the left hand side has to vanish. Thus, one obtains the congruences $a\equiv-p \mod \ell$ and $t\equiv 0 \mod \ell$.\\

In addition, in equations \eqref{eq:gs_phi_p} and \eqref{eq:atkin_gl_2} we assumed that $p \equiv 1 \mod n$ holds. In general, let us denote by $p^\prime$ the inverse modulo $n$ of the prime $p$ in question and let us write $p=nq_1+m_1, p^2=nq_2+m_2$ with $0\leq m_1, m_2<n$. Now instead of equation \eqref{eq:gs_phi_p} we glean
\[ 
G_{\ell, n, \chi}(E, \phi_p(P))=\sum_{a=1}^{\ell-1}\chi(a)(a\phi_p(P))_V=\sum_{a=1}^{\ell-1}\chi^{p^\prime p}(a)(aP)_V)^p
=G_{\ell, n, \chi^{p^\prime}}(E, P)^p
 \]
and thence $G_{\ell, n, \chi}(E, \phi_p^2(P))=G_{\ell, n, \chi^{p^{\prime 2}}}(E, P)^{p^2}$. This yields 
\[ 
\frac{G_{\ell, n, \chi}(E, \phi_p(P))}{G_{\ell, n, \chi}(E, \phi_p^2(P))}=
\frac{(G_{\ell, n, \chi^{p^\prime}}(E, P)^n)^{q_1} G_{\ell, n, \chi^{p^\prime}}(E, P)^{m_1} G_{\ell, n, \chi^{-1}}(E, P) }{(G_{\ell, n, \chi^{p^{\prime 2}}}(E, P)^n)^{q_2} G_{\ell, n, \chi^{p^{\prime 2}}}(E, P)^{m_2} G_{\ell, n, \chi^{-1}}(E, P)}
 \]
instead of equation \eqref{eq:atkin_gl_2}. The value $G_{\ell, n, \chi^{p^\prime}}(E, P)^n$ can again be determined by means of the universal elliptic Gau{\ss} sums for the Elkies case. Furthermore, since $p^\prime m_1 \equiv p^\prime p\equiv 1 \mod n$ holds, the second quantity $G_{\ell, n, \chi^{p^\prime}}(E, P)^{m_1} G_{\ell, n, \chi^{-1}}(E, P)$ can be directly computed by means of the universal elliptic Jacobi sum $J_{\ell, n, \chi^{p^\prime}, m_1}$ from \cite[Lemma 3.1]{ELGS_Rechnungen}. An analogue statement holds for the denominator. Thus, when considering arbitrary primes $p$, equation \eqref{eq:atkin_gl_1} can still be transformed into an equation analogous to \eqref{eq:atkin_gl_3}, whose left hand side can be computed by means of precomputed rational expressions.

\subsection{Run-time}
We confine ourselves to considering the run-time required for the evaluation of the expression $R(j(E), m_\ell(E), \varphi_p(m_\ell(E)))$. Combining lemma \ref{lem:darst_sigma_allgemein} and corollary \ref{koro:darstellung_besser} yields
\begin{equation}\label{eq:darst_sigma_gesamt}
\sigma_{\ell, n}(E)=\frac{\sum_{i_1=0}^{\ell-1}\varphi_p(m_{\ell}(E))^{i_1}\sum_{i_2=n_0}^{n_1}m_\ell(E)^{i_2}\sum_{i_3=0}^{v-1}a_{i_1, i_2, i_3}j(E)^{i_3}}{\frac{\partial M_{\ell, 2}}{\partial X}(\varphi_p(m_{\ell}(E)))\frac{\partial M_\ell}{\partial Y}(m_\ell(E), j(E))}.
 \end{equation}
From lemma \ref{lem:sigma_prec} and equation \eqref{eq:darst_sigma_3} we deduce that on average the computation of the term depending on $m_\ell$ and $j$ for fixed $i_1$ requires $O(\ell^2v)$ multiplications in $\F_{p^r}[\zeta_n]$. Summing over $i_1$ subsequently we see the total cost amounts to $O(\ell^3v)$ multiplications in $\F_{p^r}[\zeta_n]$. 
It is apparent that when using values of $\ell \in O(\log p)$ the run-time of this step alone for 
\textit{one} fixed $n$ considerably exceeds the run-time of $\tilde{O}(\ell^2\log p)$ multiplications in $\F_p$ necessary for fixed $\ell$ in Schoof's original algorithm, let alone the run-time $\tilde{O}(\ell\log p)$ required in the Elkies case or the ones corresponding to its numerous improvements. Taking into account that $\ell \in O(v)$ holds asymptotically according to \cite{Abramovich} we notice the alternative for the Atkin case we presented exhibits a run-time which makes its use for counting points infeasible.\\
Obviously this would still be the case if we had chosen another modular function instead of $m_\ell$ for deriving our rational expression. Indeed, in this case the value of $v$ would be smaller. However, the factor $\ell^3$ would still cause a run-time significantly exceeding that of existing approaches.

\footnotesize{
\bibliography{lit}}

\end{document}